\documentclass{article}

\usepackage{amsmath}
\usepackage{amsthm}
\usepackage{amssymb}
\usepackage{delarray}

\usepackage[dvips]{graphics}
\usepackage{epsfig}
\usepackage{color}
\usepackage[all]{xy}

\newtheorem{thm}{Theorem}[section]
\newtheorem{lemma}[thm]{Lemma}
\newtheorem{prop}[thm]{Proposition}
\newtheorem*{cor}{Corollary}

\theoremstyle{definition}
\newtheorem*{defn}{Definition}

\theoremstyle{remark}
\newtheorem*{remark}{Remark}

\theoremstyle{definition}
\newtheorem*{ex}{Example}

\title{Reduced decompositions and commutation classes}

\author{Delong Meng}

\date{c/o Department of Mathematics\\ Massachusetts Institute of Technology\\ Cambridge, MA 02139-4307\\ \ \\ Email: \texttt{delong13@mit.edu}}

\begin{document}

\maketitle

\begin{abstract}
	We study three aspects of commutation classes of reduced decompositions: the number of commutation classes, the structures of their corresponding graphs, and the enumeration of subnetworks, a concept recently introduced by Warrington \cite{Warrington}. Our bound for the number of commutation classes generalizes the works of Knuth\cite{Knuth}, Green and Losonczy \cite{Green}, and Tenner \cite{Tenner}. We analyze the structure of the graph $G(w)$ using pattern avoidance, which provides an application of Tenner's characterization of vexillary permutations in \cite{Tenner}. We also discuss some connections between our work and recent developments in the strong Bruhat order and the higher Bruhat order.
\end{abstract}

\section{Introduction}

Given $w\in S_n$, let $l(w)=\{(i, j): i<j, w(i)>w(j)\}$ denote the number of inversions of $w$. A $\textit{reduced decomposition}$ is a sequence $(i_1, i_2, \ldots, i_{l(w)})$ such that $w=s_{i_1}s_{i_2}\cdots s_{i_{l(w)}}$, where $s_i$ denotes the transposition of $w(i)$ and $w(i+1)$. Two reduced decompositions are said to be in the same \textit{commutation class} if they can be obtained from each other by applying the relation $s_is_j=s_js_i$ for $|i-j|\ge 2$.

For example, (1, 2, 1, 3, 2) is a reduced decomposition of 3421 because 3421 can be obtained from 1234 as follow:
$$\textbf{12}34\to 2\textbf{13}4\to \textbf{23}14\to 32\textbf{14}\to 3\textbf{24}1\to 3421.$$

The reduced decomposition (1, 2, \textbf{3}, \textbf{1}, 2) is in the same commutation class as (1, 2, 1, 3, 2) because $s_1s_3=s_3s_1$.\\

The set of commutation classes has received considerable attention in a variety of contexts. Algebraically, it encodes the structures of the Weyl groups \cite{Bedard}. Geometrically, it counts the number of pseudoline arrangements \cite{Felsner}. Combinatorially, it brings pattern avoidance to the study of reduced decompositions \cite{Tenner}.

Despite its importance, much remains to be understood about the set of commutation classes. For example, the number of commutation classes is known only for few special cases of $w$.

In November 2009, Warrington \cite{Warrington} introduced a new combinatorial object called \textit{subnetworks}. The concept of subnetworks, together with the author's work on the higher Bruhat order \cite{Meng}, shed new light on this subject. In this paper, we study three aspects of commutation classes:

\begin{itemize}
	\item How many commutation class are there? (Section 3)
	\item What are the relationships between them? (Section 4)
	\item Enumeration of subnetworks. (Section 5)
\end{itemize}

In Section 2, we give some definitions and background information. In particular, we define a graph $G(w)$ whose vertices correspond to commutation classes of reduced decompositions of $w$. This graph is the main object of our study.

In Section 3, we give a bound for the number of vertices of $G(w)$. Most of the previous research on this topic focused entirely on the permutation $w_0=n, n-1, \ldots, 1$ (see Knuth \cite{Knuth} and Felsner \& Valtr \cite{Felsner}). We extend their results to arbitrary $w$, which encapsulates all previously known bounds in \cite{Green, Knuth, Tenner}. The technique in the proof of the upper bound is inspired by the author's recent work \cite{Meng} on the higher Bruhat order.

In Section 4, we analyze the structure of $G(w)$ for some special $w$ using pattern avoidance. We generalize Green and Losonczy's freely braided permutations in \cite{Green} to rectangular permutations. Our method is inspired by the author's recent work \cite{Meng1} on the Boolean elements of twisted involutions. Our result provides an application of Tenner's characterization of vexillary permutations in \cite{Tenner}.

Section 5 focuses on some enumerations of subnetworks, which in some sense are permutation patterns for commutation classes.

Even though Sections 3, 4, 5 study three different topics, they are intrinsically connected. For example, to prove the lower bound of the size of $G(w)$, we use subnetworks to define a partial order on the set of commutation classes, which helps us to analyze the structure of $G(w)$.

Since our study draws many ideas from recent papers, there is great potential for further development. For example, the enumeration of subnetworks is a new field of study. We discuss some possible directions for future research in Section 6.

\section{Definitions and Background}

	Refer to the first paragraph of Section 1 for the definitions of reduced decompositions and commutation classes.
	
	It is well-known that any two reduced decompositions can be obtained from each other by applying the relations $s_is_j=s_js_i$ for $|i-j|\ge 2$ and $s_is_{i+1}s_i=s_{i+1}s_is_{i+1}$. (See \cite{Stanley}.) The former is called a \textit{short braid move} and the latter a \textit{long braid move}.
	
	It is convenient to represent commutation classes as vertices of a graph whose edges correspond to long braid moves.
	
	\begin{defn}
		Given $w\in S_n$, define the graph $G(w)$ as follow:
		\begin{itemize}
			\item Each vertex $v$ of $G(w)$ corresponds to a commutation class $g(v)$ of reduced decompositions of $w$.
			\item Two vertices of $v_1$ and $v_2$ share an edge if there exists a long braid move between two reduced decompositions $\rho_1$ and $\rho_2$, where $\rho_1\in g(v_1)$ and $\rho_2\in g(v_2)$.
		\end{itemize}
	\end{defn}
	
	Elnitsky \cite{Elnitsky} showed that $G(w)$ is connected and bipartite.
	
	\begin{ex}
		Below is the graph $G(3421)$. Note that $G$ is not a directed graph, but for convenience, we use arrows to indicate the long braid move $(i, i-1, i)\to (i-1, i, i-1)$.
		
		\xymatrix{
			&(2, 1, 3, 2, 3), (2, 3, 1, 2, 3) \ar[d]&\\
			&(2, 1, 2, 3, 2) \ar[d]&\\
			&(1, 2, 1, 3, 2), (1, 2, 3, 1, 2) &
		}
	\end{ex}
	
	Note that each long braid move changes the sum of indices by exactly 1. (The sum of indices of $(i_1, i_2, \ldots, i_l)$ is $i_1+i_2+\cdots+i_l$.) Thus it is natural for us to define a poset on the commutation classes ranked by the sum of indices. We call a long braid move an \textit{upward move} if it increases the sum of indices by 1, and a \textit{downward move} otherwise.
	
	\begin{defn}
		Let $P(w)$ denote the partial order on the commutation classes of reduced decompositions of $w$ whose cover relation is defined as follow. A commutation class $A$ covers $B$ if there exist reduced decompositions $\rho_a\in A$ and $\rho_b\in B$ such that $\rho_b$ is obtained from $\rho_a$ by a downward move.
	\end{defn}

	\begin{remark}
		The poset $P(w_0)$ is the higher Bruhat order $B(n, 2)$, where $w_0=n, n-1, \ldots, 1$. (See \cite{Ziegler}.)
	\end{remark}

	We frequently use pattern avoidance in our study of commutation classes.
	
	\begin{defn}
		Let $w=w(1)w(2)\cdots w(n)$ and $p=p(1)p(2)\cdots p(k)$. The permutation $w$ contains a $p$-pattern if there exist $i_1<\cdots<i_k$ such that $w(i_i)\cdots w(i_k)$ is in the same relative order as $p(1)\cdots p(k)$. That is, $w(i_h)<w(i_j)$ if and only if $p(h)<p(j)$. Furthermore, let $N_p(w)$ denote the number of $p$-patterns in $w$. If $N_p(w)=0$, then $w$ is \textit{p-avoiding}.
	\end{defn}
	
	We now define subnetworks introduced by Warrington \cite{Warrington}.
	
	\begin{defn}
    Let $X$ be a set of reduced decompositions of an element $p$ of $S_m$. Given a reduced decomposition $\rho$ of a permutation $w\in S_n$, we pick $m$ distinct integers from $\{1, 2, \ldots, n\}$. Considering their relative positions, $\rho$ induces a reduced decomposition of these $m$ integers. An \textit{$X$-subnetwork of $\rho$} is a set of $m$ distinct integers such that their reduced decomposition (induced by $\rho$) is in $X$.
	\end{defn}

	\begin{ex}
		Given $a<b<c<d$, we keep track of the relative positions of this quadruple.
\[
\begin{array}{llll}
 & (\dots, d, \dots, c, \dots, b, \dots, a, \dots) & & \\
\to \cdots \to & (\dots, d, \dots, b, \dots, c, \dots, a, \dots) & \to \cdots \to &
(\dots, d, \dots, b, \dots, a, \dots, c, \dots) \\
\to \cdots \to & (\dots, d, \dots, a, \dots, b, \dots, c, \dots) & \to \cdots \to &
(\dots, a, \dots, d, \dots, b, \dots, c, \dots) \\
\to \cdots \to & (\dots, a, \dots, b, \dots, d, \dots, c, \dots) & \to \cdots \to &
(\dots, a, \dots, b, \dots, c, \dots, d, \dots)
\end{array}
\]
	This reduced decomposition of $w$ induces the reduced decomposition $(2,3,2,1,2,3)$ of $(a, b, c, d)$, and thus $(a, b, c, d)$ is a $(2,3,2,1,2,3)$-subnetwork.
	\end{ex}


\section{The size of $G(w)$}

This section is dedicated to the proof of the following two theorems.


	\begin{thm}
		\label{bounding}
		Given $w\in S_n$, let $Y$ denote the maximum number of long braid moves a reduced decomposition of $w$ has, then
		$$2^{\lceil \frac Y 2\rceil}+N_{321}(w)- {\left\lceil \frac Y 2\right\rceil} \le |G(w)|<3^{l(w)},$$
		where $|G(w)|$ denote the number of vertices of $G(w)$.
	\end{thm}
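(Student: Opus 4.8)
The plan is to treat the two inequalities independently, with both resting on a single encoding: following the higher Bruhat order philosophy of \cite{Meng}, I would attach to each commutation class $C$ its orientation vector $\tau(C)\in\{0,1\}^{N_{321}(w)}$, whose coordinate at a $321$-pattern (a triple of pairwise-crossing wires) records which of the two local braid configurations $s_is_{i+1}s_i$ or $s_{i+1}s_is_{i+1}$ that triple sits in. A long braid move toggles exactly one coordinate of $\tau$, so $\tau$ is injective on commutation classes and turns questions about $G(w)$ into questions about which orientation vectors are \emph{consistent} (realizable). Both bounds then amount to counting consistent vectors.

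For the upper bound I would build the wiring diagram of $w$ one wire at a time, inserting wires $1,2,\dots,n$ in order. Fixing a consistent orientation on the first $k-1$ wires, the number of consistent ways to thread in the $k$-th wire is governed by the order in which it crosses the wires it is inverted with; the key lemma---this is the step borrowed from \cite{Meng}---is that each new crossing multiplies the number of admissible routes by at most $3$. Telescoping over all insertions gives at most $3^{l(w)}$ consistent vectors, hence $|G(w)|\le 3^{l(w)}$, and the strict inequality comes for free from the boundary: the first wire to be crossed contributes a factor strictly smaller than $3$ (it has no earlier crossings to be reordered against), so the product never attains $3^{l(w)}$. The hard part here is the per-crossing factor-$3$ estimate, i.e.\ bounding how much freedom the consistency conditions leave when a new wire is woven through an existing arrangement.

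For the lower bound I would start from a class $v_0$ realizing the maximum number $Y$ of long braid moves. Two braid sites interfere only when their length-three windows overlap, so the conflict graph of the $Y$ sites is essentially an interval graph and has independence number at least $\lceil Y/2\rceil$; choosing such an independent set yields $\lceil Y/2\rceil$ pairwise-commuting braid moves out of $v_0$. Flipping an arbitrary subset of them toggles a corresponding subset of $\tau$-coordinates, so the $2^{\lceil Y/2\rceil}$ subsets give $2^{\lceil Y/2\rceil}$ \emph{distinct} classes. For each of the remaining $N_{321}(w)-\lceil Y/2\rceil$ triples I would exhibit one further class obtained by a braid move localized at that triple; since each such move toggles a previously-unused coordinate of $\tau$, these classes are pairwise distinct and distinct from the Boolean family. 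Adding the two contributions yields $2^{\lceil Y/2\rceil}+N_{321}(w)-\lceil Y/2\rceil$; the inequality $N_{321}(w)\ge Y\ge\lceil Y/2\rceil$, which holds because each braid site of a single word uses a distinct $321$-triple, keeps the additive term honest.

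I expect the main obstacle to be the additive half of the lower bound: producing, for every $321$-triple not used by the Boolean family, an actual commutation class that realizes the toggle, and arguing via $\tau$ (or equivalently via the partial order $P(w)$ and the subnetwork invariants promised in the introduction) that no two of these---and none of the Boolean classes---coincide. The subtraction of $\lceil Y/2\rceil$ is precisely the bookkeeping that prevents the braid sites used in the Boolean family from being counted a second time. As a sanity check, for $w=3421$ one has $Y=2$, $N_{321}(w)=2$, and $l(w)=5$, giving $2^{1}+2-1=3\le |G(3421)|=3<3^{5}$, so the lower bound is already tight on the example in Section 2.
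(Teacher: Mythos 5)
Your upper bound is essentially the paper's: the paper phrases Knuth's wire-insertion as deleting the strip joining the two sides labelled $1$ of the Elnitsky polygon and inducting on area, and, like you, it quotes Knuth for the per-wire factor $3^i$; so that half is fine modulo the same external input. The genuine gap is in the additive part of the lower bound, exactly where you predicted trouble. Producing, for each of the $N_{321}(w)-\lceil Y/2\rceil$ unused triples, a class obtained by a braid move localized at that triple does not by itself give distinctness: if $C_1$ is obtained from some class $D_1$ by toggling coordinate $t_1$ and $C_2$ from some $D_2$ by toggling $t_2$, nothing prevents $C_1=C_2$ unless you control all the remaining coordinates of $\tau$ --- and you have not established that $\tau$ is even injective on commutation classes, only that adjacent classes differ. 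The paper closes this with a rank argument you are missing: each long braid move changes the sum of indices (equivalently, the number of $212$-subnetworks, which is constant on a commutation class) by exactly $\pm 1$; this statistic ranges over all of $\{0,1,\dots,N_{321}(w)\}$, with both extremes realized by explicit sorting orders, and since $G(w)$ is connected every intermediate value is attained. The cube occupies only $\lceil Y/2\rceil+1$ consecutive values of the statistic, so each of the remaining $N_{321}(w)-\lceil Y/2\rceil$ values forces at least one class off the cube. That single invariant is the missing idea.

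Two smaller points on the cube itself. The claim that the conflict graph of the $Y$ braid sites is ``essentially an interval graph and has independence number at least $\lceil Y/2\rceil$'' is not justified as stated: interval graphs can have independence number $1$. What saves you here is structure specific to reduced words --- two braid windows cannot share two positions (that forces the non-reduced factor $i{+}1,i,i{+}1,i$), hence overlapping windows share exactly one position and no three windows pairwise conflict, so the conflict graph is a disjoint union of paths. The paper sidesteps all of this: at least half of the $Y$ moves are downward (or upward) moves, and two downward moves can never overlap since that would force the non-reduced substring $i,i-1,i,i-1,i$; these $\lceil Y/2\rceil$ moves are then automatically independent. Finally, the distinctness of the resulting $2^{\lceil Y/2\rceil}$ classes should likewise be certified by a commutation-invariant statistic on the $\lceil Y/2\rceil$ disjoint triples rather than by appeal to an unproved injectivity of $\tau$.
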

	
	\begin{remark}
		For $n$ large enough, the upper bound can be reduced to $2.487^{l(w)}$.
	\end{remark}

	
	\begin{thm}\label{boundSum}
		Given $w_1, w_2, \ldots, w_k\in S_n$ such that $l(w_1)=l(w_2)=\cdots=l(w_k)=l$, then
		$$|G(w_1)|+|G(w_2)|+\cdots+|G(w_k)|<4^{l+n}.$$
	\end{thm}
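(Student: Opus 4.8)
The plan is to factor the sum as (number of permutations involved) $\times$ (maximum size of a single $G(w_i)$), bounding the two factors by independent arguments. By Theorem~\ref{bounding} each individual graph satisfies $|G(w_i)| < 3^{l(w_i)} = 3^{l}$. We may assume the $w_i$ are pairwise distinct, since the statement is only meaningful in that case; then $k$ is at most $I(n,l)$, the number of permutations in $S_n$ with exactly $l$ inversions. Consequently
$\sum_{i=1}^{k} |G(w_i)| < k\cdot 3^{l} \le I(n,l)\cdot 3^{l}$ (the first inequality being strict whenever $k \ge 1$, and the case $k=0$ being trivial), so it suffices to prove the purely enumerative estimate $I(n,l) \le (4/3)^{l}\, 4^{n}$.

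To control $I(n,l)$ I would invoke the Mahonian generating function $\sum_{l\ge 0} I(n,l)\, q^{l} = \prod_{k=1}^{n}(1+q+\cdots+q^{k-1}) = \prod_{k=1}^{n}\frac{1-q^{k}}{1-q}$. Since every coefficient $I(n,l)$ is nonnegative, for each fixed $q\in(0,1)$ a single term is dominated by the entire sum, giving the Chernoff-type inequality $I(n,l)\,q^{l} \le \prod_{k=1}^{n}\frac{1-q^{k}}{1-q}$, that is, $I(n,l) \le q^{-l}\prod_{k=1}^{n}\frac{1-q^{k}}{1-q}$. The decisive step is choosing $q$ to match the target constant. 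Taking $q=3/4$ makes both factors collapse: $q^{-l}=(4/3)^{l}$, while $\prod_{k=1}^{n}\frac{1-q^{k}}{1-q} = \prod_{k=1}^{n}4\bigl(1-(3/4)^{k}\bigr) = 4^{n}\prod_{k=1}^{n}\bigl(1-(3/4)^{k}\bigr) < 4^{n}$. Hence $I(n,l) < (4/3)^{l}\,4^{n}$, and combining with the per-permutation bound yields $\sum_{i=1}^{k}|G(w_i)| < I(n,l)\,3^{l} < (4/3)^{l}4^{n}\cdot 3^{l} = 4^{l+n}$, as required.

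The main obstacle is obtaining an estimate for $I(n,l)$ that is strong enough in the moderate range of $l$. Naive approaches fail here: bounding via the Lehmer code gives $I(n,l)\le\binom{l+n-1}{n-1}\le 2^{l+n-1}$, which when multiplied by $3^{l}$ only produces $6^{l}2^{n}$, already exceeding $4^{l+n}$ once $l$ is a modest multiple of $n$ (precisely once $(3/2)^{l}>2^{n}$); symmetrically the trivial bound $I(n,l)\le n!$ is wasteful except for $l$ very close to $\binom{n}{2}$. It is exactly the tuned value $q=3/4$ in the generating-function argument — engineered so that $3\cdot(4/3)=4$ and $4(1-q)=1$ hold simultaneously — that delivers the constant $4$ uniformly across all $l$. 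To finish, I would verify that the strict inequality supplied by Theorem~\ref{bounding} propagates cleanly through the chain and confirm the degenerate cases $l=0$ and $k=0$.
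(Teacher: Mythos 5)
Your proposal is correct, but it takes a genuinely different route from the paper. The paper never factors the sum into (number of permutations) $\times$ (bound per permutation): instead it chooses, in each commutation class of each $w_i$, a canonical representative reduced decomposition (the one maximizing inversions of the word), observes that this gives an injection from the union of all the commutation classes into a single set of words, and then encodes those words as balanced parenthesis strings via Knuth's trick, bounding everything at once by the Catalan number $C_{l+n-1}<4^{l+n}$; the individual bound $|G(w)|<3^{l}$ plays no role. You instead combine $k\le I(n,l)$ with $|G(w_i)|<3^{l}$ and control the Mahonian number $I(n,l)$ by evaluating $\prod_{k=1}^{n}(1-q^{k})/(1-q)$ at the tuned point $q=3/4$, a clean Chernoff-type estimate in which the identities $3\cdot\tfrac43=4$ and $4(1-q)=1$ make the constant $4$ appear. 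Your route is modular and explains \emph{why} the base $4$ is forced by the base $3$ of Theorem~\ref{bounding}; the paper's route is self-contained (it does not depend on Theorem~\ref{bounding}, whose upper bound is proved later via Elnitsky tilings, so there is no circularity either way) and yields the marginally sharper Catalan bound. Two small points to tidy in your write-up: the intermediate inequality $|G(w_i)|<3^{l}$ degenerates to $1<1$ when $l=0$, so the $l=0$ case genuinely needs the separate one-line check you flag; and the distinctness of the $w_i$ is an implicit hypothesis in both arguments (the paper's injection also relies on it, since distinct permutations have disjoint sets of reduced decompositions).
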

	
	
	\begin{proof} [Proof of Theorem \ref{boundSum}]
		Define an \textit{inversion} of a reduced decomposition as a pair of indices $(i, j)$ such that $i>j$ and $i$ is to the left of $j$. For each commutation class, we call a reduced decomposition a \textit{representative element} if for any consecutive $i, j$ with $i-j\ge 2$, $i$ is to the left of $j$. Such a representative exists because we can pick an element with the most of number of inversions. Suppose this representative element is $i_1 i_2\cdots i_l$.
		
		Since $w_1, w_2, \ldots, w_k$ all have $l$ inversions, there is an injective map between their commutation classes and the representative elements.
		
		We now apply the same trick as Knuth $\cite{Knuth}$ (p. 36). We first write down $i_1$ left parentheses. Thereafter, we write down $i_k-i_{k+1}+1$ right parentheses followed by one left parenthesis for each $1\le k \le l$, and we finish with $i_l$ right parentheses. This process yields a balanced string of $l+i_1-1$ pairs of parentheses. The number of such strings is fewer than the Catalan number $C_{l+n-1}=\dfrac{1}{l+n}\dbinom{2(l+n-1)}{l+n-1}$, which is less than $4^{(l+n)}$.
	\end{proof}
	
	The rest of this section contains the proof of Theorem \ref{bounding}.	We prove the lower bound by construction, and we prove the upper bound by extending Knuth's ``cutpath" technique \cite{Knuth} to arbitrary $w$.


	\subsection{Lower bound}
	
	We first prove that there is a hypercube of dimension $\lceil \frac Y 2\rceil$ embedded in $G(w)$, which has $2^{\lceil \frac Y 2\rceil}$ vertices. Then we construct $N_{321}(w)-\lceil \frac Y 2\rceil$ more by analyzing the poset $P(w)$.
	
	\subsubsection{Cube construction} \label{cubeConstr}
	
	\begin{prop} \label{cube}
		There exists a hypercube of dimension $\lceil \frac Y 2\rceil$ embedded in $G(w)$.
	\end{prop}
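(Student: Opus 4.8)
The plan is to embed a hypercube into $G(w)$ by exhibiting $\lceil Y/2 \rceil$ mutually ``independent'' long braid moves, all of which can be performed on a single commutation class, and which can be toggled on and off independently of one another. Recall that $Y$ is the maximum number of long braid moves available at any one reduced decomposition; I would begin by fixing a commutation class $v_0$ (equivalently, a reduced decomposition $\rho_0$) that realizes this maximum, so that $\rho_0$ admits $Y$ distinct sites where a pattern $(i, i\pm1, i)$ occurs and a long braid move can be applied.

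The key idea is that two long braid moves commute — in the sense that applying one does not destroy the other, and the result is independent of order — precisely when the two sites involve transpositions with indices that are ``far apart,'' i.e.\ differ by at least $2$ so that the relevant generators commute via short braid moves. The heart of the construction is therefore to select, from the $Y$ available moves, a sub-collection of pairwise-commuting moves of size $\lceil Y/2 \rceil$. \emph{First} I would formalize a notion of a \emph{conflict} between two long braid sites, namely that their index sets overlap in a way that forces them to share a generator $s_i$; I expect that each long braid move can conflict with only a bounded number of others, and in fact a careful accounting should show that among any two ``adjacent'' conflicting moves we can always retain at least one, yielding an independent set of size at least $\lceil Y/2\rceil$. \emph{Second}, given such an independent set $M_1, \ldots, M_{\lceil Y/2\rceil}$ of pairwise-commuting moves, I would show that for every subset $S \subseteq \{1, \ldots, \lceil Y/2\rceil\}$ there is a well-defined commutation class $v_S$ obtained from $v_0$ by performing exactly the moves indexed by $S$, and that $v_S$ and $v_{S'}$ are adjacent in $G(w)$ iff $S$ and $S'$ differ in a single element. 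This is exactly the adjacency structure of the $\lceil Y/2\rceil$-dimensional hypercube.

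The main technical obstacle will be the \emph{independence} step: proving that the moves can be toggled in genuinely independent fashion, i.e.\ that performing move $M_j$ neither creates nor destroys the braid pattern at site $M_{j'}$ for $j \neq j'$, and that the resulting classes $v_S$ are genuinely distinct vertices of $G(w)$ (not accidentally identified after the short-braid equivalence). To control this I would track each long braid site through its underlying triple of wires in the wiring-diagram / Elnitsky picture: a long braid move is a local reconfiguration of three mutually crossing wires, and two such moves are independent exactly when their wire-triples are disjoint or meet in at most one wire. The combinatorial lemma I need is that one can always choose $\lceil Y/2\rceil$ of the $Y$ moves whose wire-triples are pairwise compatible in this sense; the factor $\tfrac12$ (and the ceiling) should emerge from a matching/covering bound on the conflict graph of the moves. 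Once independence is established, distinctness of the $v_S$ follows because the sum-of-indices grading separates them along each coordinate, and the hypercube embedding is immediate.

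<br>

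\emph{Remark on scope.} This establishes only the $2^{\lceil Y/2\rceil}$ summand of the lower bound; the additional $N_{321}(w) - \lceil Y/2\rceil$ vertices are constructed separately in \S\ref{cubeConstr} via the poset $P(w)$, and I would defer that to the subsequent argument rather than fold it into the cube construction here.
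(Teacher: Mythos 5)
Your overall architecture is right---find $\lceil Y/2\rceil$ pairwise independent long braid moves at a single reduced decomposition and toggle them to get a hypercube---but the step that actually produces the factor $\tfrac12$ is missing. You assert that ``a careful accounting should show'' that the conflict graph of the $Y$ sites admits an independent set of size $\lceil Y/2\rceil$, justified only by the moves having bounded conflict degree. Bounded degree does not give a half-fraction independent set (degree at most $d$ only guarantees a $\tfrac{1}{d+1}$ fraction), and a priori the conflict graph could contain odd cycles, in which case no matching/covering bound delivers $\lceil Y/2\rceil$. The paper closes this gap with one specific observation that your proposal never makes: classify each long braid move as \emph{upward} or \emph{downward} according to whether it raises or lowers the sum of indices. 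At least $\lceil Y/2\rceil$ of the $Y$ moves have the same direction, say downward, and two downward moves can never overlap, since overlapping downward sites would force $\rho$ to contain a substring of the form $i, i-1, i, i-1, i$, which occurs in no reduced decomposition. So the majority-direction moves are automatically pairwise independent and no conflict-graph extraction is needed. (Equivalently, the up/down classification shows the conflict graph is bipartite, which is exactly the structural fact your ``retain one of every conflicting pair'' heuristic silently requires.)

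A secondary issue: you claim distinctness of the classes $v_S$ ``because the sum-of-indices grading separates them along each coordinate,'' but two subsets $S\neq S'$ of the same cardinality give the same index sum, so the grading alone cannot distinguish them; distinctness has to come from the independence of the moves themselves (each toggled site leaves a recognizably different local configuration that no sequence of short braid moves can erase). Your deferral of the remaining $N_{321}(w)-\lceil Y/2\rceil$ vertices to the poset argument matches the paper's organization.
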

	
	\begin{proof}
		Let $\rho$ denote a reduced decomposition with $Y$ long braid moves. Without loss of generality, assume that at least $\lceil \frac Y 2\rceil$ of these moves are downward moves. We claim that these moves do not intersect with each other.
		
		Suppose on the contrary that there exist two intersecting downward moves, then $\rho$ must contain a substring of the form $i, i-1, i, i-1, i$, which cannot appear in any reduced decompositions. Therefore no downward moves can intersect with each other.
		
		Now we number the downward moves $1, 2, \ldots, k$, where $k\ge Y/2$. Since these moves are independent of each other, applying any subset of these moves changes $\rho$ to a reduced decomposition in a different commutation class as illustrated in the picture below.\\
		
		\xymatrix{
			& w=42615378 &\\
			& (3, \underline{2, 1, 2}, \underline{5, 4, 5}, 3, \underline{7, 6, 7})\ar[dl]\ar[d]\ar[dr] &\\
			(3, \underline{1, 2, 1}, \underline{5, 4, 5}, 3, \underline{7, 6, 7}) \ar[d]\ar[dr] & (3, \underline{2, 1, 2}, \underline{4, 5, 4}, 3, \underline{7, 6, 7})\ar[dl]\ar[dr] & (3, \underline{2, 1, 2}, \underline{5, 4, 5}, 3, \underline{6, 7, 6}) \ar[dl]\ar[d]\\
			(3, \underline{1, 2, 1}, \underline{4, 5, 4}, 3, \underline{7, 6, 7})\ar[dr] & (3, \underline{1, 2, 1}, \underline{5, 4, 5}, 3, \underline{6, 7, 6}) \ar[d] & (3, \underline{2, 1, 2}, \underline{4, 5, 4}, 3, \underline{6, 7, 6})\ar[dl]\\
			&(3, \underline{1, 2, 1}, \underline{4, 5, 4}, 3, \underline{6, 7, 6})&
		}

		\ \newline Thus, these $k$ downward moves generate a hypercube of dimension $k$. Since $k\ge Y/2$, there exists a hypercube of dimension $\lceil \frac Y 2\rceil$ embedded in $G(w)$.
	\end{proof}
	
	One consequence of Proposition \ref{cube} is that if all upward and downward moves do not intersect with each other (which is called \textit{freely braided} in \cite{Green}), then entire graph $G(w)$ is a hypercube.
	
	\begin{cor} (\cite{Green} Theorem 5.2.1)
		\label{freeBraid}
    If $w$ is freely braided, then there are exactly $2^Y$ commutation classes.
	\end{cor}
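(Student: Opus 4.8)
The plan is to promote the embedded hypercube of Proposition~\ref{cube} from dimension $\lceil Y/2 \rceil$ to the full dimension $Y$, and then to show that this larger hypercube already exhausts $G(w)$. In the proof of Proposition~\ref{cube} we were forced to retain only the downward moves (at least half of the total) because, from a single reduced decomposition, an upward and a downward move can occupy overlapping positions. The freely braided hypothesis is designed precisely to remove this obstruction: by definition \emph{no} two long braid moves intersect, regardless of direction. So I would begin by fixing a reduced decomposition $\rho$ attaining the maximum, i.e.\ having exactly $Y$ long braid moves; under the freely braided hypothesis these $Y$ moves occur at pairwise disjoint triples of positions.

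I would then record the two combinatorial consequences of disjointness that drive the whole argument. First, applying one move alters only its own three positions $i,i-1,i \leftrightarrow i-1,i,i-1$ and therefore neither destroys nor creates any other move; the list of available moves at any decomposition reachable from $\rho$ is always the same list of $Y$ toggles. Second, repeating the independence argument of Proposition~\ref{cube} but now with all $Y$ moves, applying an arbitrary subset $S\subseteq\{1,\dots,Y\}$ produces a reduced decomposition $\rho_S$, and distinct subsets land in distinct commutation classes because the state of at least one triple differs, and disjoint triples cannot be reconciled by short braid moves. This yields an embedded hypercube $Q_Y\hookrightarrow G(w)$ with $2^Y$ vertices, whose coordinates are the states of the $Y$ independent triples.

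Finally I would close the graph. Since no move can create a new braidable configuration or suppress an existing one, every vertex $\rho_S$ has degree exactly $Y$ (an extra neighbor would violate the maximality of $Y$, while a missing one would require an interference ruled out by the hypothesis), and each of its $Y$ incident edges merely adds or deletes one index from $S$. Hence every neighbor of a vertex of the hypercube is again a vertex of the hypercube, so the hypercube is a union of connected components of $G(w)$. As $G(w)$ is connected by Elnitsky~\cite{Elnitsky}, the hypercube must be all of $G(w)$, and counting its vertices gives $|G(w)|=2^Y$.

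I expect the main obstacle to be exactly this last closure step — certifying that the $Y$ toggles are globally consistent, so that no commutation class escapes the hypercube and no spurious long braid move appears at any $\rho_S$. Everything here hinges on the non-interference supplied by the freely braided hypothesis; it is the single place where the argument would break down for general $w$, and it is what separates the full-dimensional cube of this corollary from the half-dimensional cube guaranteed by Proposition~\ref{cube} alone.
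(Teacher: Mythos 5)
Your proposal follows the same route as the paper: the paper treats this corollary as an immediate consequence of Proposition~\ref{cube}, remarking only that when no two long braid moves intersect the cube construction applies to all $Y$ moves and the whole of $G(w)$ is a hypercube. The closure step you spell out (every neighbor of a cube vertex stays in the cube, hence by Elnitsky's connectivity the cube exhausts $G(w)$) is exactly the part the paper leaves implicit and defers to Green and Losonczy, so you are supplying detail rather than taking a different path.
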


	

	\subsubsection{Poset of commutation classes} \label{poset}
	
	Consider the poset $P(w)$ on commutation classes ranked by the sum of indices. Since the cube we constructed in Section \ref{cubeConstr} occupies exactly $\lceil \frac Y 2\rceil+1$ ranks of $P(w)$. We claim that the $P(w)$ has $N_{321}(w)+1$ ranks, which would imply that there are at least $N_{321}(w)-\lceil \frac Y 2\rceil$ vertices not lying on the cube.
	
		\begin{lemma} \label{rankLemma}
     The poset $P(w)$ has $N_{321}(w)+1$ ranks.
		\end{lemma}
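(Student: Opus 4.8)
The plan is to work entirely with the sum-of-indices rank function. First I would record that a short braid move leaves the sum of indices unchanged while a long braid move changes it by exactly $\pm 1$; hence the sum is well defined on commutation classes and is an honest rank function for $P(w)$, whose covers are the downward moves. Because $G(w)$ is connected (Elnitsky) and every edge shifts the sum by exactly one, the set of attained values of the sum is an interval of integers, so the number of ranks is $(\max-\min)+1$. The lemma therefore reduces to the single identity $\max-\min=N_{321}(w)$.

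The engine is a formula expressing the sum of indices through the $321$-patterns. Viewing a reduced decomposition as a wiring diagram, I would attach to each $321$-pattern $T=\{a<b<c\}$ (a triple of pairwise-crossing wires) its \emph{orientation} $\sigma(T)\in\{0,1\}$, namely the cyclic order in which the three crossings $ab,ac,bc$ occur; since any two of these share a wire they never commute, so $\sigma(T)$ depends only on the commutation class. The crucial local fact is that a long braid move acts on a substring $(i,i\pm1,i)$ whose three active wires form \emph{exactly one} $321$-pattern: the move reverses the order of that triple's three crossings and leaves every other triple's crossing-order intact. Thus each long braid move flips precisely one $\sigma(T)$, and a short computation ($3i-2$ versus $3i-1$ for the two substrings) shows the orientation with higher sum is the same geometric one at every site. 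Integrating these $\pm1$ increments along the connected graph gives $\sum_k i_k = A(w)+\#\{T:\sigma(T)=1\}$ for a constant $A(w)$; this matches $G(3421)$, whose three ranks have sums $9,10,11$ while $N_{321}(3421)=2$.

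With this formula in hand, $\max-\min$ equals the range of $\#\{T:\sigma(T)=1\}$ over commutation classes, so the lemma comes down to showing that both the all-zero and the all-one orientations are realized; that is, that some class has every $321$-triangle in orientation $0$ and some class has every one in orientation $1$. This is the main obstacle. It is \emph{not} enough to locate a local minimum of $P(w)$, since an orientation-$1$ triple need not be ``assembled'' into a flippable triangle. The content to be supplied is that a class possessing any orientation-$1$ triple always admits an assembled orientation-$1$ triangle (so the global minimum really is the all-zero state), and dually for the maximum. I would establish this by localizing the higher Bruhat order to the sub-arrangement cut out by $\mathrm{Inv}(w)$, using the remark that $P(w_0)=B(n,2)$ has rank function ranging from $0$ to $\binom n3 = N_{321}(w_0)$ (Ziegler), and transporting its extreme elements; alternatively I would exhibit the extremes directly as the lexicographically smallest and largest reduced words of $w$ and verify that each forces a single orientation on every $321$-pattern. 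Once realizability of the two extreme orientations is secured, $\max-\min=N_{321}(w)-0$ and the count of $N_{321}(w)+1$ ranks follows; the rest is bookkeeping already carried out above.
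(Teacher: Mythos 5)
Your framework coincides with the paper's: your orientation statistic $\sigma(T)$ is exactly the paper's count of $212$-subnetworks (a $321$-triple $a<b<c$ is a $212$-subnetwork precisely when its three crossings occur in the order $bc,\ ac,\ ab$), and the observation that a long braid move flips the orientation of exactly one triple while moving the sum of indices by one is the paper's key step as well. The problem is that you stop at the one point that carries the entire content of the lemma: showing that the values $0$ and $N_{321}(w)$ of this statistic are both attained. You correctly flag that a local extremum of $P(w)$ need not a priori be a global one, and you name two strategies (localizing the higher Bruhat order, or taking lexicographically extreme reduced words), but you execute neither; as written, the argument only shows that the number of ranks is \emph{at most} $N_{321}(w)+1$, with the lower bound deferred. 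That deferred step is a genuine gap, not bookkeeping.

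The paper closes it with two explicit reduced words whose verification is short enough that you should supply it. (i) Sort the identity into $w$ by moving $w(n)$ to position $n$, then $w(n-1)$ to position $n-1$, and so on; at each stage the not-yet-placed values sit in increasing order, so when the value $a$ of a $321$-triple $a<b<c$ (occupying positions $i_1<i_2<i_3$ of $w$ with $w(i_1)=c$) is swept rightward to position $i_3$ it meets the larger unplaced values in increasing order, crossing $b$ before $c$, and $b$ crosses $c$ at a later stage; every triple therefore gets the order $ab,\ ac,\ bc$, i.e.\ orientation $121$, and the statistic is $0$. (ii) Dually, moving $w(1)$ to position $1$, then $w(2)$ to position $2$, and so on, sweeps $c$ leftward past the smaller unplaced values in decreasing order, producing the order $bc,\ ac,\ ab$ for every $321$-triple, so the statistic is $N_{321}(w)$. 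With these two classes in hand, your interval argument via connectedness of $G(w)$ finishes the proof. Your alternative via lexicographically extreme words is plausible but would need an analogous verification, and the higher-Bruhat-order localization is not obviously routine for general $w$.
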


		\begin{proof}
    	We consider the number of $212$-subnetworks. (For simplicity, we use 212 instead of (2, 1, 2).) It's easy to see that all reduced decompositions in a commutation class have the same number of $212$-subnetworks. Moreover, changing $i+1, i, i+1$ to $i, i+1, i$ decreases the number of $212$-subnetworks by exactly $1$. Since the number of $212$-subnetworks is at least $0$ and at most $N_{321}(w)$, we only need to show that both extremes are achievable.

    Starting from $1, 2, \ldots, n$, the following two reduced decompositions would do the job:

    	\begin{itemize}
        \item \label{zeroCase} Move $w(n)$ to the $n$th position, then move $w(n-1)$ to the $(n-1)$th position, and so on. We basically move every number from $n$ to $1$ to its final position. This gives $0$ subnetworks of $212$. (For example, consider the reduced decomposition $123121$ of the permutation $4321$.)
        \item Move $w(1)$ to the first position, then move $w(2)$ to the second position, and so on. In this way, any triple that forms a $321$-pattern would yield a $212$ subnetwork.
    	\end{itemize}
		\end{proof}

\noindent This finishes the proof for the lower bound. We end this section with a class of permutations characterized in $\cite{Tenner}$.

		\begin{cor} \label{pathGraph}
    	If $G(w)$ is a line, then $|G(w)|=N_{321}(w)+1$.
		\end{cor}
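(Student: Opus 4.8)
The plan is to show that, under the hypothesis that $G(w)$ is a path, the poset $P(w)$ is forced to be a chain, so that its number of vertices equals its number of ranks, which is $N_{321}(w)+1$ by Lemma \ref{rankLemma}.

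First I would record that every edge of $G(w)$ is a cover relation of $P(w)$: a long braid move changes the sum of indices by exactly $1$, so the two classes it joins lie in consecutive ranks. Hence $G(w)$ is precisely the Hasse diagram of $P(w)$, and walking along the path $v_0 - v_1 - \cdots - v_m$ corresponds to a lattice walk $r(v_0), r(v_1), \ldots, r(v_m)$ with steps $\pm 1$ in the rank (the number of $212$-subnetworks). Since the path is the whole graph, each vertex is visited exactly once, so the number of vertices at rank $k$ equals the number of times this walk sits at height $k$.

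The heart of the argument is to prove that this walk is monotone, i.e. has no interior local maximum or minimum. I would do this by promoting the key observation from the proof of Proposition \ref{cube}: if a single commutation class admitted two distinct downward moves, then either they are independent (non-intersecting), in which case applying them in the two possible orders produces four distinct classes forming a $4$-cycle in $G(w)$, or they intersect, in which case some representative contains the forbidden alternating substring $i, i-1, i, i-1, i$. The first case contradicts $G(w)$ being a path (a tree contains no cycle), and the second cannot occur in a reduced decomposition; the same dichotomy applies verbatim to two upward moves. Therefore every commutation class has at most one downward move and at most one upward move. Consequently each interior (degree-$2$) vertex of the path has exactly one edge going down and one going up, so the walk strictly passes through it — no peaks and no valleys.

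Finally I would assemble the pieces: a $\pm 1$ walk with no interior extremum is strictly monotone, hence visits a contiguous block of ranks with exactly one vertex per rank. By the extremal constructions in the proof of Lemma \ref{rankLemma} there exist classes with $0$ and with $N_{321}(w)$ subnetworks of $212$, so ranks $0$ and $N_{321}(w)$ are occupied; monotonicity then forces the two endpoints of the path to be these classes and the walk to run $0, 1, \ldots, N_{321}(w)$, giving $|G(w)| = N_{321}(w)+1$. The main obstacle is the dichotomy step: one must argue carefully at the level of commutation classes, rather than single words, that two simultaneously available moves of the same direction can always be realized as disjoint consecutive substrings of one representative (the independent case) unless they overlap into the forbidden word — in effect upgrading the single-word reasoning of Proposition \ref{cube} to classes, after which the ``square from two independent moves'' obstruction does the rest.
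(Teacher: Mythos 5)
Your overall strategy --- every edge of $G(w)$ changes the rank (number of $212$-subnetworks) by $\pm 1$, so a path with no interior peak or valley is monotone and hence has exactly one vertex per rank, giving $N_{321}(w)+1$ by Lemma \ref{rankLemma} --- is the natural way to fill in this corollary, which the paper states without an explicit argument. However, your key dichotomy is genuinely incomplete. You claim that two downward moves available at the same commutation class are either independent (yielding a $4$-cycle) or overlap into the forbidden word $i, i-1, i, i-1, i$. This is the single-word reasoning of Proposition \ref{cube}, and as you yourself suspect, it does not survive the passage to classes: take $w_0=4321$ and the class $\{(3,2,3,1,2,3),\,(3,2,1,3,2,3)\}$ at the top of the octagon $G(4321)$. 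Each representative exhibits only one braid move, but the class as a whole has \emph{two distinct downward moves} (to the classes of $(2,3,2,1,2,3)$ and of $(3,2,1,2,3,2)$); they share a letter occurrence, no representative contains $i,i-1,i,i-1,i$, and they do not lie on a $4$-cycle. Your dichotomy would wrongly conclude that this configuration cannot occur.

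The conclusion you need at that step is still true, and the paper already provides the correct trichotomy in Lemma \ref{cycles}: two edges at a common vertex either span $6$ indices (induced $4$-cycle), or span $5$ indices forming a reduced word of the longest element of $S_4$ (induced $8$-cycle --- this is exactly the case your dichotomy misses), or span $5$ indices in one of the three listed configurations with no induced cycle. In the first two cases $G(w)$ contains a cycle, contradicting the hypothesis that it is a path. In each of the three residual configurations $(i+1,i,i+1,i+2,i+1)$, $(i,i+1,i,i-1,i)$, $(i-1,i+1,i,i+1,i-1)$, one checks directly that one of the two moves is upward and the other downward. Hence when $G(w)$ is a path no vertex carries two downward (or two upward) edges, and the rest of your argument --- strict monotonicity of the rank along the path, together with the two extremal classes constructed in the proof of Lemma \ref{rankLemma} occupying ranks $0$ and $N_{321}(w)$ --- goes through. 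Replace your ad hoc dichotomy with an appeal to Lemma \ref{cycles} and the proof is complete.
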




	\subsection{Upper bound} \label{upperbound}
	
 	We prove the upper bound by counting the number of tilings of an Elnitsky Polygon.
		
		\begin{defn}
			Given $w\in S_n$, an \textit{Elnitsky Polygon $E(w)$} is a polygon with $2n$ sides (all with unit length) labeled $1, 2, \ldots, n, w(1), w(2), \ldots, w(n)$ such that the sides labeled $i$ and $w(i)$ are parallel.
		\end{defn}
		
		Elnitsky \cite{Elnitsky} showed that there is a bijection between commutation classes of reduced decompositions of $w$ and the tilings of $E(w)$ with rhombi of unit sides.
		
		\begin{ex}
			Below is an example of rhombus tiling of $E(543162)$.
			\begin{center}
				\includegraphics[width=0.50\textwidth]{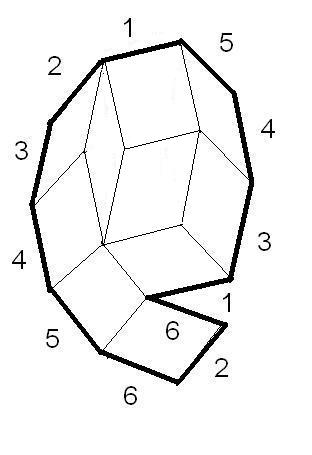}
			\end{center}
		\end{ex}
		
		We prove the follow equivalent form of the upper bound of Theorem \ref{bounding}.
		
		\begin{prop}\label{boundTiling}
			There are at most $3^A$ tilings of an Elnitsky polygon of area $A$. Furthermore, 3 can be replace by 2.487 for sufficiently large $A$.
		\end{prop}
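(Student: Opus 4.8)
The plan is to sweep the tiling by a \emph{cutpath}, following Knuth, and to read off from the sweep an injective encoding of each tiling by a ternary string of length $A$; this immediately gives the bound $3^A$, while a refinement of the local count at each sweep step yields the constant $2.487$ for large $A$.

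First I would recall that every rhombus tiling of $E(w)$ uses exactly $A=l(w)$ rhombi, one for each inversion of $w$, and that the only directions occurring in the tiling are the $n$ edge-directions of $E(w)$. A \emph{cutpath} is a monotone path built from these edge-directions that runs from one endpoint of the boundary to the other and separates the polygon into a region already ``swept'' and a region not yet swept. The left boundary path (sides $1,2,\dots,n$) is the initial cutpath and the right boundary path (sides $w(1),\dots,w(n)$) is the final one. Pushing the cutpath across a single rhombus replaces the two sides of that rhombus lying on the cutpath by its other two sides; this is the geometric incarnation of a long braid move. A full sweep from the initial to the final cutpath crosses each of the $A$ rhombi exactly once and so induces a linear order on the rhombi.

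The heart of the argument is to record, at each of the $A$ sweep steps, one symbol from a $3$-letter alphabet that determines which rhombus is crossed next and how the cutpath changes, in such a way that the tiling can be reconstructed by replaying the sweep. Concretely, I would fix a deterministic rule for the ``active'' corner of the current cutpath (for instance the leftmost convex corner at which a rhombus of the untiled region fits) and show that, once the shape of the cutpath is known, there are at most three admissible local moves there. Injectivity follows because the cutpath evolves deterministically from the symbol sequence, so distinct tilings yield distinct strings. This produces an injection from tilings of $E(w)$ into $\{0,1,2\}^A$ and hence $3^A$.

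For the improved constant I would replace the crude ``$3$ per step'' estimate by an amortized count. The key point is that a step genuinely offering all three choices forces the local cutpath shape into a configuration from which later steps offer fewer; tracking the relevant finite local state of the cutpath as one sweeps gives a transfer matrix recording the admissible moves, so the number of tilings of area $A$ is bounded by a matrix product of length $A$. Bounding the per-step growth by the largest eigenvalue of this matrix (equivalently, solving the associated recurrence) gives a base strictly below $3$, and carrying out the computation yields $2.487$ once $A$ is large enough that the boundary contributions are negligible. The main obstacle I anticipate is exactly this refined count: the injection into $\{0,1,2\}^A$ is comparatively routine, but formalizing the finite-state description of the cutpath and proving the eigenvalue bound—showing that worst-case local states cannot chain together to sustain the full factor of $3$, as in the analysis of Felsner and Valtr—is where both the saving and the difficulty lie.
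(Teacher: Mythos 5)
Your overall instinct (Knuth's cutpath) is the right one, but the way you deploy it contains a genuine gap. The factor of $3$ per step in Knuth's argument is a statement about the cutpath of a \emph{single} pseudoline threading through the fixed arrangement of the remaining ones: at each of its $i$ crossings there are at most $3$ candidates for the next line it crosses. You instead propose a \emph{global} sweep of the whole tiling, one rhombus at a time, and assert that a deterministic choice of ``active corner'' leaves at most three admissible local moves. That assertion is the entire content of the bound and is not justified; indeed it is unclear how to make it true. If the active corner is determined by the current cutpath, then the rhombus that could be flipped there is already forced (a rhombus is determined by the two edge directions meeting at the corner), so the real freedom is in \emph{which} convex corner of the cutpath carries the next rhombus of the tiling, and a monotone path in an $n$-direction zonogon can have up to $n-1$ convex corners. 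Encoding that choice costs far more than one ternary symbol per step, and no canonical sweep order with a uniform $3$-way branching is exhibited. The paper avoids this by inducting on the polygon: it deletes the strip of rhombi joining the two sides labeled $1$ (a single pseudoline of length $i$ when $w(i)=1$), bounds the number of possible strips by $3^{i}$ using exactly Knuth's single-pseudoline cutpath count, and multiplies by the inductive bound $3^{A-i}$ for the smaller polygon $E(w')$.

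The second claim has the same problem in sharper form. The paper obtains $2.487$ by citing Felsner and Valtr, who improve the single-pseudoline count from $3^{i}$ to $4i\cdot 2.486976^{i}$; plugged into the same induction this gives $2.487^{A}$ for $A$ large. Your transfer-matrix/eigenvalue scheme over the global sweep is not set up (no state space, no matrix, no eigenvalue computation), and since it rests on the unproved $3$-per-step encoding it cannot be salvaged as written. If you restructure the argument strip by strip---so that your ``sweep'' is the passage of one pseudoline through the others, where the $3$-way branching is actually provable---you recover the paper's proof, and the constant $2.487$ can then be imported from Felsner--Valtr rather than rederived.
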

		
		\begin{proof}
			We induct on $A$. When $A=3$, there are 2 tilings, which is fewer than $3^3$. (In fact, the bound is weak for small $A$.)
			
			The key idea of the inductive step is to delete the strip that connects the two sides labeled 1. Suppose this polygon is $E(w)$ where $w(i)=1$. Then this strip has length $i$. After deleting this strip, we obtain a polygon $E(w')$, where $w'=w(1)w(2)\cdots w(i-1) w(i+1)\cdots w(n)$. The picture below illustrates the main idea of our proof.
			
			\begin{center}
				\includegraphics[width=0.80\textwidth]{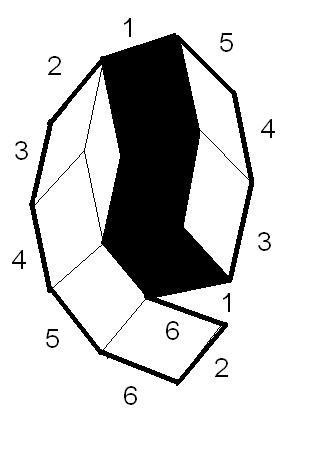}
			\end{center}
			
			By the inductive hypothesis, there are at most $3^{A-i}$ rhombus tilings of $E(w')$. Thus, it remains to show that there are at most $3^i$ strips that connects the two sides labeled 1. This was done in Knuth \cite{Knuth} p. 39.
			
			In a recent paper \cite{Felsner}, Felsner and Valtr reduced the bound $3^i$ to $4i\cdot 2.486976^i$. Thus for $A$ sufficiently large, the number of tilings is at most $2.487^A$.
		\end{proof}
		
		\begin{remark}
			Our result has an analogue for Coxeter groups of type B. The number of commutation classes is at most $3^{r(w)}$, where $r(w)$ is the rank of $w$ in the Bruhat order on Coxeter groups of type B. We omit the details here. Interested readers are referred to \cite{Bjorner} for the background reading of Coxeter groups and \cite{Elnitsky} for an analogue of Elnitsky Polygon for type B.
		\end{remark}



\section{The structure of $G(w)$}

In this section, we first study the cycles of $G(w)$. Then we classify permutations $w$ such that $G(w)$ is a higher dimensional rectangle, which is a generalization of Green and Losonczy's freely braided permutations in \cite{Green}. Finally, we study some other permutations for which $G(w)$ has a nice geometric structure.

	\subsection{Cycles of $G(w)$}
	
		We first define induced cycles.
		
		\begin{defn}
			An \textit{induced cycle} in the graph $G$ is a cycle with $k$ vertices $a_1, a_2, \ldots, a_k$ such that $a_i$ is connected to $a_{i+1}$ (with indices mod $k$) and no edges exist between $a_i$ and $a_j$ for $|i-j|\neq 1$.
		\end{defn}
		
		Shapiro et al. proved that $G(w)$ is generated by 4-cycles and 8-cycles in $\cite{Shapiro}$. We now characterize the conditions for which two edges can lie on an induced cycle.


		\begin{lemma} \label{cycles}
		Given two edges of $G(w)$ drawn from the same vertex, we have the following three cases:
		
			\begin{itemize}
				\item They lie on an induced 4-cycle if the two long braid moves are applied to 6 indices.
				\item They lie on an induced 8-cycle if both edges are applied to a substring that has the form of a reduced decomposition of the longest word in $S_4$ (for instance $(i+1, i, i+1, i-1, i, i+1)$).
				\item They do not lie on an induced cycle.
			\end{itemize}
			
		\end{lemma}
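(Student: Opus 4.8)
The plan is to work entirely inside Elnitsky's model, where each vertex of $G(w)$ is a rhombus tiling of $E(w)$ (equivalently a wiring diagram on $n$ wires) and each edge is a \emph{hexagon flip}: three mutually crossing wires bounding an empty triangle, whose flip trades a $212$-subnetwork for a $121$-subnetwork. By the computation in the proof of Lemma \ref{rankLemma}, a single long braid move changes the number of $212$-subnetworks by exactly $1$, and I will use this as a certificate that two vertices are \emph{non}-adjacent. Fix a vertex $v$ and two edges $e_1,e_2$ leaving it, corresponding to flippable triangles on wire-triples $\tau_1,\tau_2$. The whole argument is local, and I classify the pair by whether the two triangles share a crossing. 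Since a crossing is an unordered pair of wires, they share one exactly when $|\tau_1\cap\tau_2|=2$; when $|\tau_1\cap\tau_2|\le 1$ the six crossings of $\tau_1$ and $\tau_2$ are all distinct, which is precisely the case ``applied to $6$ indices.''

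First I would treat the disjoint-crossing case $|\tau_1\cap\tau_2|\le 1$. Here $e_1$ and $e_2$ reorder disjoint sets of letters, and since the empty-triangle condition for a triple depends only on the orientations of triples sharing two of its wires, flipping $\tau_1$ leaves $\tau_2$ flippable. Hence the two flips commute: from $v$ I reach $v_1,v_2$ by the single flips and a common $v_{12}$ by either composite, producing a $4$-cycle $v,v_1,v_{12},v_2$. To see it is induced I must exclude the two diagonals, and here the $212$-count certificate does it: $v$ and $v_{12}$ differ by two flips, so their counts differ by $0$ or $2$, never $1$, and likewise for $v_1,v_2$; so no diagonal is an edge.

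Next is the case $|\tau_1\cap\tau_2|=2$ in which the four wires of $\tau_1\cup\tau_2$ pairwise cross, that is, $e_1,e_2$ both act inside a six-letter substring that is a reduced word of the longest element of $S_4$. Restricting attention to these four wires, the local flip graph is exactly $G(4321)$, and a direct enumeration shows this is a single chordless $8$-cycle on the eight commutation classes of $B(4,2)$. I would then argue that this octagon embeds in $G(w)$ as an \emph{induced} cycle: every flip among the four wires fixes all other wires, while every flip touching an outside wire $x$ changes the orientation bit of a triple containing $x$ and so leaves the eight-vertex family (whose external bits are all equal); thus no braid move joins two of the eight classes except along the octagon. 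Consequently $e_1$ and $e_2$ are two edges meeting at $v$ on an induced $8$-cycle.

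Finally comes the remaining case, $|\tau_1\cap\tau_2|=2$ with the fourth pair of wires \emph{not} crossing, so the $S_4$ substring is absent. The shared crossing makes the moves conflict: flipping $\tau_1$ displaces the common crossing and destroys the empty triangle of $\tau_2$, so $\tau_2$ is no longer flippable at $v_1$ (and symmetrically), ruling out a $4$-cycle; and the two triples of $\tau_1\cup\tau_2$ other than $\tau_1,\tau_2$ are not crossing-triples, so the two extra flips of the octagon are unavailable, ruling out an $8$-cycle. The hard part will be to exclude \emph{longer} induced cycles through $e_1,e_2$. My plan is to take a shortest induced cycle $C$ through both edges and show it is supported on the wires of $\tau_1\cup\tau_2$: any flip along $C$ whose crossings are disjoint from those of $\tau_1,\tau_2$ commutes with the rest and can be cancelled against its mate, contradicting minimality once one checks the shortened walk stays a chordless cycle — and this cancellation-while-preserving-inducedness is the delicate step. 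Once $C$ is local, a finite check over the few sub-arrangements on at most five wires finishes the argument, since only $\tau_1$ can restore the flippability of $\tau_2$, which forces $C$ to retreat toward $v$ and create a chord; this establishes that no induced cycle exists and completes the trichotomy.
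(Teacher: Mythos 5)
Your case decomposition is the same as the paper's (disjoint crossings, i.e.\ six letters, giving a $4$-cycle; a shared crossing with all four wires pairwise crossing, i.e.\ a local longest word of $S_4$, giving an $8$-cycle; a shared crossing otherwise), just transported into the wiring-diagram model, and your first two cases are sound --- the $212$-count parity certificate for chordlessness of the $4$-cycle is in fact a nice addition that the paper leaves implicit. The problem is the third case. You correctly rule out an induced $4$-cycle and an induced $8$-cycle there, but the lemma asserts that $e_1$ and $e_2$ lie on \emph{no} induced cycle of any length, and that is precisely the step you defer. Your plan --- take a shortest induced cycle $C$ through both edges and ``cancel'' any flip whose crossings are disjoint from those of $\tau_1,\tau_2$ against its mate --- does not go through as stated: such a flip and its mate need not be adjacent on $C$, deleting the pair does not in general leave a closed walk (the intermediate vertices all change), and even when it does there is no reason the result is again a chordless cycle still containing $e_1$ and $e_2$. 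You flag this yourself as ``the delicate step,'' which means the heart of the third case is missing; as written the proposal only excludes induced cycles of length at most $8$.

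The paper closes this case by a different and more direct observation: since none of the three possible five-letter configurations ($(i+1,i,i+1,i+2,i+1)$, $(i,i+1,i,i-1,i)$, $(i-1,i+1,i,i+1,i-1)$) is a reduced word of the longest element of $S_4$, the moves supported on those letters cannot by themselves close up into a cycle, so any cycle through $v$ using both $e_1$ and $e_2$ must eventually apply a long braid move on letters disjoint from these five; that move commutes with the local ones, hence is applicable at three consecutive vertices of the cycle, and this forces a chord. If you want to salvage your minimal-cycle strategy you would in any case need to prove a commuting-move lemma of exactly this kind, so you may as well argue it directly as the paper does.
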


		\begin{proof}
			
			First, we clarify that an edge between two vertices must be the set of long braid moves acting on the same three indices. (In other words, when we say the edge is applied to $i+1, i, i+1$, we mean that all the long braid moves act on these indices.)

			Notice that two long braid moves either act on 5 or 6 indices. If they act on 6 indices, then they lie on an induced 4-cycle. Suppose they act on 5 indices, and these 5 indices belong to a substring that has the form of a reduced decomposition of the longest word in $S_4$. Since the graph of the longest word of $S_4$ is an induced 8-cycle. These two long braid moves lie on an induced 8-cycle.

			We are now left to show that in all other cases these two edges cannot lie on an induced cycle. Since they act on 5 indices, there are three cases to consider:
			
			\begin{enumerate}
  			\item $i+1, i, i+1, i+2, i+1$
  			\item $i, i+1, i, i-1, i$
  			\item $i-1, i+1, i, i+1, i-1$
			\end{enumerate}
			
			Since none of the three can form a reduced decomposition of the longest word in $S_4$, we need a long braid move that does not act on any of these indices to complete a cycle. But then the cycle is not an induced cycle because this new long braid move can be applied to all three vertices.
	
		\end{proof}


	\subsection{Rectangular permutations}
	
		We first give the rigorous definition of a higher dimensional rectangle.


		\begin{defn}
		
			Let $x\in \mathbb{R}^k$ be a lattice point with coordinates $(x_1, x_2, \ldots, x_k)$, where $x_i\in \mathbb{N}$ for all $i$. An \textit{$x$-rectangle} is defined as the set of lattice points $(y_1, y_2, \ldots, y_n)$ where $0\le y_i\le x_i$ for all $i$, and the set of edges between these points. Two points $y=(y_1, y_2, \ldots, y_n)$ and $z=(z_1, z_2, \ldots, z_n)$ in this set are connected if and only if there exist an $i$ such that $|y_i-z_i|=1$ and $y_j-z_j=0$ for all $j\neq i$.
			
		\end{defn}
		\begin{defn}
    	A \textit{rectangular permutation} is a permutation for which $G(w)$ is isomorphic to an $x$-rectangle for some $x\in \mathbb{R}^k$.
		\end{defn}

		The next proposition is Tenner's characterization of vexillary permutations.
	
		\begin{prop} \label{vex}
			(Tenner \cite{Tenner} Theorem 3.8) Suppose a permutation $w$ contains a $p$-pattern. There exists a reduced decomposition of $w$ which contains a reduced decomposition of $p$ as a consecutive substring if and only if $p$ is 2143-avoiding.
		\end{prop}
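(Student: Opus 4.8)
The plan is to reformulate the factor condition in the language of the weak order and then treat the two implications separately. Observe first that a reduced word of $w$ contains a reduced word of $p$ as a consecutive substring precisely when $w$ admits a \emph{length-additive} factorization $w=u\,\tilde p\,v$ with $\ell(w)=\ell(u)+\ell(p)+\ell(v)$, where $\tilde p$ is the copy of $p$ acting on a block of consecutive positions $\{c+1,\dots,c+m\}$ and fixing everything else. Indeed, concatenating reduced words of $u$, $\tilde p$ (a shift of a reduced word of $p$ by $c$), and $v$ produces exactly such a factor, and conversely any factor splits the word into a prefix, a shifted reduced word of $p$, and a suffix, with lengths adding because the whole word is reduced. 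In the pseudoline picture underlying Elnitsky's bijection, this says the $m$ strands realizing the $p$-subnetwork can be made to occupy consecutive positions and to perform all of their mutual crossings inside one contiguous vertical strip, with no foreign strand crossing between them there. So the theorem becomes: such a \emph{localization} of the $p$-subnetwork is available for every containing $w$ if and only if $p$ is $2143$-avoiding.

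For the direction ``$p$ vexillary $\Rightarrow$ always localizable'' I would induct on $n-m$, the number of foreign strands. Given an occurrence of $p$ on strands $S=\{i_1<\dots<i_m\}$, the goal is to peel off one foreign strand $e\notin S$, apply the inductive hypothesis to the smaller permutation $w'$ obtained by deleting $e$, and then reinsert $e$ so that all of its crossings fall outside the localized strip. The point where $2143$-avoidance enters is the choice of $e$ and the feasibility of this reinsertion: because the occurrence of $p$ contains no two inversions that are simultaneously independent (disjoint supports) and separated in the manner a $2143$ exhibits, there is no pattern configuration that traps a foreign strand between two pattern crossings, and one can route $e$ entirely to one side of the gathered strands using only commutation and braid moves. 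Turning the diagram structure of a vexillary permutation into such an explicit, reduced-at-every-step routing is the technical core of this direction.

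For ``$p$ not $2143$-avoiding $\Rightarrow$ some $w$ fails'' I would produce an explicit obstruction by \emph{inflating} a $2143$-pattern sitting inside $p$: insert one new strand whose value lies strictly between the two descent pairs of the $2143$ and whose starting position also lies strictly between them, so that this strand is forced to cross the pattern strands and to lie positionally between the two pattern inversions. For the resulting $w$ one argues directly: any length-additive factorization $w=u\,\tilde p\,v$ with $\tilde p$ a consecutive-block copy of $2143$ would have to realize both inversions of $\tilde p$ inside the middle factor, yet the trapped strand contributes an inversion that, in \emph{every} reduced word of $w$, must be resolved between the two pattern crossings; hence those two crossings can never be made consecutive and cannot be segregated into $u$ or $v$ without breaking length-additivity. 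Because this is a property of $w$ itself rather than of a single reduced word, it rules out all reduced words at once. (Here I am reading the ``if and only if'' as a characterization of $p$: localization holds for every containing $w$ exactly when $p$ is vexillary.)

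The step I expect to be the main obstacle is the inductive gathering in the vexillary direction. It is intuitively clear that $2143$-avoidance should let one slide foreign strands out of the way, but converting the nested inversion structure of a vexillary permutation into a concrete sequence of short and long braid moves that localizes the subnetwork—while verifying that the word stays reduced throughout and that the peeled strand genuinely escapes the strip—is delicate, and is exactly where I expect Tenner's diagram characterization of vexillary permutations, together with the subnetwork bookkeeping introduced above, to carry the argument.
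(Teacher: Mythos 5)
First, a point of comparison: the paper does not prove this statement at all --- it is quoted as Tenner's Theorem~3.8 and used as a black box in the proof of Theorem~\ref{recPerm} --- so there is no in-paper argument to measure yours against. Judged on its own, your reformulation of the factor condition as a length-additive factorization $w=u\,\tilde p\,v$, with $\tilde p$ a copy of $p$ acting on a block of consecutive positions, is correct and is the right way to set the problem up. But the forward direction (vexillary implies localizable) is not actually proved. The peel-off-one-foreign-strand induction is precisely the hard content of Tenner's theorem, and you say so yourself (``the technical core'', ``the main obstacle''): nothing in the sketch explains which foreign strand to peel, why $2143$-avoidance guarantees it can be routed entirely to one side of the gathered block, or why the word stays reduced throughout the reinsertion. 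As written, this direction is a statement of intent rather than an argument.

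The converse direction contains a concrete error. If you insert a strand whose value lies strictly between the two value-pairs of a $2143$ occurrence \emph{and} whose position lies strictly between the two position-pairs, that strand forms no inversion with any of the four pattern strands --- for $p=2143$ itself the construction yields $21354$, where the inserted strand is the fixed point $3$ and crosses nothing. So it is not ``forced to cross the pattern strands,'' and it contributes no inversion ``that must be resolved between the two pattern crossings.'' The true obstruction is the opposite one: because the inserted strand never crosses anything, it can never be moved out from between the pattern strands, so those strands never occupy $m$ consecutive positions and no block $\tilde p$ can ever form. With that correction the example does work for $p=2143$; for a general non-vexillary $p$ you must additionally rule out factors arising from \emph{other} occurrences of $p$ inside the inflated $w$ (and note that $\tilde p$ must be a consecutive-block copy of $p$, not of $2143$, a slip in your write-up), which your argument does not address.
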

		
		We now characterize rectangular permutations using pattern avoidance.
		
		\begin{thm} \label{recPerm}
			A permutation is rectangular if and only if it is 4321, 42531, and 53142-avoiding.
		\end{thm}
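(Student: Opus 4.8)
The plan is to prove both implications by analysing the local structure of $G(w)$ through Lemma~\ref{cycles} and by transferring pattern containment into the language of reduced words via Proposition~\ref{vex}. The first thing I would record is that each of the three forbidden patterns $4321$, $42531$, $53142$ is itself $2143$-avoiding, which is a direct check. Hence, by Proposition~\ref{vex}, if $w$ contains any one of them, then $w$ admits a reduced decomposition in which a reduced decomposition of that pattern occurs as a \emph{consecutive} substring, supported on a block of consecutive generators $s_c,\dots,s_{c+m-2}$. I would also invoke the reverse--complement symmetry $i\mapsto n-i$ of the Coxeter diagram, which induces a graph isomorphism $G(42531)\cong G(53142)$; this reduces the pattern analysis to the two graphs $G(4321)$ and $G(42531)$. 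Note also that $42531$ and $53142$ are themselves $4321$-avoiding, so the three patterns are genuinely independent obstructions.

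For the ``only if'' direction I would first compute the two base graphs. The graph $G(4321)$ is the induced $8$-cycle (this is exactly the longest word of $S_4$ appearing in Lemma~\ref{cycles}), and $G(42531)$ is a small explicit graph which I expect to contain a vertex incident to three edges that pairwise bound induced $4$-cycles yet fail to close up into a $3$-cube. Neither graph is a Cartesian product of paths, and the cleanest certificate of this is that products of paths are median graphs (paths are median and the class is closed under Cartesian products), whereas $C_8$ and $G(42531)$ are not median. I would then argue that non-rectangularity is inherited: the consecutive block furnished by Proposition~\ref{vex} lets the long braid moves confined to that block act independently of the rest of the word, so the commutation classes reachable by such moves form a subgraph of $G(w)$ isomorphic to $G(p)$ that is a retract of $G(w)$. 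Since median graphs are closed under retracts, a product-of-paths $G(w)$ would force $G(p)$ to be median, a contradiction.

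For the ``if'' direction, assume $w$ avoids all three patterns. Avoiding $4321$ means, via Proposition~\ref{vex} applied to $4321$ together with Lemma~\ref{cycles}, that no reduced decomposition contains an $S_4$-longest-word substring; consequently no pair of coincident edges bounds an induced $8$-cycle, and every pair of coincident edges that bounds an induced cycle in fact bounds a $4$-cycle arising from two braid moves on six indices. Avoiding $42531$ and $53142$ then removes the remaining local obstruction, forcing every triple of pairwise-$4$-cycle edges at a common vertex to span a full $3$-cube. With both obstructions eliminated I would reconstruct the product structure by hand: group the long braid moves into parallel classes (two moves being parallel when they are opposite edges of a common induced $4$-cycle), show that distinct classes commute so that $G(w)$ factors as a Cartesian product, and that each class contributes a single path factor, yielding an $x$-rectangle. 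The main obstacle I anticipate is twofold: verifying rigorously that the block subgraph in the necessity direction is a genuine retract (equivalently, a gated or convex subgraph) so that the inheritance argument is valid, and, in the sufficiency direction, globally assembling the local $4$-cycle and $3$-cube data into an honest Cartesian-product decomposition rather than merely a graph that is grid-like in a neighbourhood of each vertex.
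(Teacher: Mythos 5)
Your skeleton (realize each forbidden pattern as a consecutive substring via Proposition~\ref{vex}, then analyze the local cycle structure via Lemma~\ref{cycles}) matches the paper's, but both of the load-bearing steps are left as acknowledged gaps, and the first of them is a genuine logical hole rather than a routine verification. For necessity you need the copy of $G(p)$ sitting inside $G(w)$ to be a \emph{retract}, not merely an induced subgraph: median graphs are not closed under induced subgraphs (the cycle $C_6$ sits inside the cube $Q_3$ as an induced subgraph, and an induced $8$-cycle even occurs as the boundary of the $(2,2)$-rectangle), so without the retract property the median argument proves nothing. You give no reason why the block subgraph should be a retract, and proving it would require controlling all long braid moves of $w$ outside the block --- this is at least as hard as the theorem itself. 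The paper avoids this entirely with a local certificate: rectangularity forces every edge of $G(w)$ to have at most one non-commuting edge at each endpoint, and for the substrings coming from $42531$ and $53142$ (namely $(i+2,i,i+1,i,i+2,i-1,i)$ and its mirror) one exhibits a single edge failing to commute with two distinct edges at one vertex; for $4321$ one uses that on the $S_4$-longest-word $8$-cycle of Lemma~\ref{cycles} no two consecutive edges lie on an induced $4$-cycle, which cannot happen in a product of paths. Note also that your description of $G(42531)$ (``three edges that pairwise bound $4$-cycles yet fail to close into a $3$-cube'') is a guess you have not verified, and it is not the obstruction the paper actually finds.

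For sufficiency, your plan to ``group the long braid moves into parallel classes and show distinct classes commute'' is exactly the part you admit you cannot yet assemble globally, so as written the direction is unproved. The paper makes this concrete by coordinatizing $G(w)$ through the graded poset $P(w)$: label the maximal commutation class by $0$, label the classes it covers by standard basis vectors of $\mathbb{R}^k$, and propagate rank by rank, assigning $2v_1-v_2$ when the new edge fails to commute with the edge $v_1v_2$ above it (pattern avoidance guarantees at most one such pair, so the label is well defined) and the sum of the labels of the covering classes otherwise; the label of the minimal element is then the vector $x$ of the $x$-rectangle. If you want to salvage your route, you should replace the median/retract machinery by the paper's direct two-non-commuting-edges certificate in the necessity direction, and supply an explicit coordinatization such as the one above in the sufficiency direction.
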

		\begin{proof}[Proof of necessity]
			
			We say that two edges emanating from the same vertex \textit{commute} if they lie on a 4-cycle. If $w$ is rectangular, then for any edge $e$ of $G(w)$, there is at most one edge that does not commute with $e$. Suppose on the contrary that $w$ contains a 4321, 42531, or 53142-pattern. By Proposition \ref{vex}, there is a reduced decomposition of $w$ that contains one of the following substrings:
			
			\begin{enumerate}
				\item $(i+1, i, i+1, i-1, i, i+1)$
				\item $(i+2, i, i+1, i, i+2, i-1, i)$
				\item $(i+2, i-1, i, i-1, i+1, i+2, i+1)$
			\end{enumerate}
			
			In the first case, Lemma \ref{cycles} indicates that there is an induced 8-cycle, which cannot occur in a rectangle. In the second and the third cases, there is always an edge that does not commute with at least two edges as illustrated below.\\
			
			\xymatrix{
			&(i+2, i+1, i, i+1, i+2, i-1, i) \ar[d] &(i, i+1, i+2, i+1, i, i-1, i)\ar[dl]\\
			& (i+2, i, i+1, i, i+2, i-1, i) \ar[d] &\\
			& (i+2, i, i+1, i+2, i-1, i, i-1) &
			}

	\ \newline Therefore, $w$ is rectangular only if $w$ is 4321, 42531, and 53142-avoiding.			
		\end{proof}

		
		\begin{proof}[Proof of sufficiency]
		
			Suppose $w$ is 4321, 42531, and 53142-avoiding. We analyze the substrings that each permutation pattern forbids.
    	\begin{enumerate}
      	\item \label{one} 4321-avoiding eliminates induced 8-cycles.
      	\item 4231-avoiding eliminates $(i-1, i+1, i, i+1, i-1)$.
      	\item 4312 and 3421-avoiding eliminates $(i+1, i, i+1, i+2, i+1)$ and $(i, i+1, i, i-1, i)$.
    	\end{enumerate}
    	If the substrings in Case 2 and Case 3 do not intersect, then for every edge in $G(w)$, there is at most one other edge that does not commute with it. (The intersection of Case 2 and Case 3 would yield a 42531 or a 53142-pattern.)
			
			For a given $G(w)$, we construct its corresponding $x$-rectangle explicitly.
			
			We label each commutation class with a vector. Let $M$ denote the maximal element of $P(w)$, the poset on commutation classes. Suppose $M$ has $k$ downward moves. We label $M$ as 0, and we label each commutation class covered by $M$ with an element of the standard basis in $\mathbb{R}^k$.
			
			We now label the commutation classes in order of their ranks. Suppose we have labeled the first $i$ rows of the Hasse diagram of $P(w)$. Let $v$ denote an element in the $(i+1)$st row. If there exist $v_1$ in the $i$th row and $v_2$ in the $(i-1)$st row such that $vv_1$ does not commute with $v_1v_2$, then we label $v$ with $2v_1-v_2$. Since $w$ is 4321, 42531, and 53142-avoiding, there is at most one such pair $(v_1, v_2)$. Thus, this label is well defined. If such $v_1$ and $v_2$ do not exist, then we label $v$ with the sum of all labels of commutation classes that cover $v$.
			
			Let $x$ denote the label of the minimal element of $P(w)$. Then $G(w)$ is isomorphic to a $x$-rectangle. Therefore if $w$ is 4321, 42531, and 53142-avoiding, then $w$ is a rectangular permutation.
		
		\end{proof}
		
				
		\begin{ex}
			The permutation 326514 is rectangular because $G(326514)$ is a (1, 2)-rectangle. The picture below shows the label of each commutation class. For simplicity, we use 21254534 instead of (2, 1, 2, 5, 4, 3, 4) to denote a reduced decomposition. The notation 21245434 (0, 1) means the commutation class of 21245434 is labeled with (0, 1).
			
			\xymatrix{
				& & 21254534 (0, 0)\ar[dl]\ar[dr]&\\
				& 21245434 (0, 1) \ar[dl]\ar[dr] & & 12154534 (1, 0)\ar[dl] \\
				21245343 (0, 2) \ar[dr] & & 12145434 (1, 1)\ar[dl] & \\
				& 12145343 (1, 2) & & \\
			}
		\end{ex}



	\subsection{Octagonal cylinders and stack of rectangles}

		We now consider 42531 and 53142-avoiding permutations that contain exactly one 4321-pattern. If $w$ is such a permutation, then $G(w)$ is an octagonal cylinder in higher dimension.

		\begin{ex}
			Below is the graph $G(4, 3, 2, 1, 5, 6, 7, 11, 10, 8, 9)$.
			
			\begin{center}
				\includegraphics[width=0.50\textwidth]{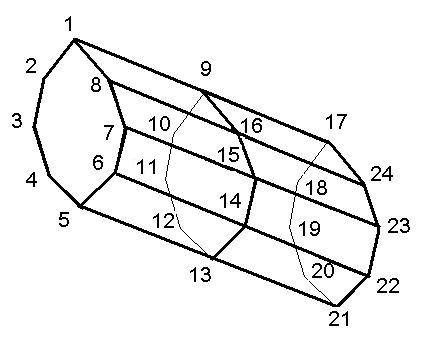}
			\end{center}

			Each number represents a commutation class. The following table shows an element from each commutation class.

		\begin{tabular}[t]{|c|c|c|c|}
    	\hline
    	Vertex & Commutation Class & Vertex & Commutation Class\\\hline
    	1 & (3, 2, 1, 3, 2, 3, 10, 9, 10, 8, 9) & 2 & (3, 2, 1, 2, 3, 2, 10, 9, 10, 8, 9)\\
    	3 & (3, 1, 2, 1, 3, 2, 10, 9, 10, 8, 9) & 4 & (1, 2, 3, 2, 1, 2, 10, 9, 10, 8, 9)\\
    	5 & (1, 2, 3, 1, 2, 1, 10, 9, 10, 8, 9) & 6 & (2, 1, 2, 3, 2, 1, 10, 9, 10, 8, 9)\\
    	7 & (2, 3, 1, 3, 1, 3, 10, 9, 10, 8, 9) & 8 & (2, 3, 2, 1, 2, 3, 10, 9, 10, 8, 9)\\
    	9 & (3, 2, 1, 3, 2, 3, 10, 9, 10, 8, 9) & 10 & (3, 2, 1, 2, 3, 2, 9, 10, 9, 8, 9)\\
    	11 & (3, 1, 2, 1, 3, 2, 9, 10, 9, 8, 9) & 12 & (1, 2, 3, 2, 1, 2, 9, 10, 9, 8, 9)\\
    	13& (1, 2, 3, 1, 2, 1, 9, 10, 9, 8, 9) & 14 & (2, 1, 2, 3, 2, 1, 9, 10, 9, 8, 9)\\
    	15 & (2, 3, 1, 3, 1, 3, 9, 10, 9, 8, 9) & 16 & (2, 3, 2, 1, 2, 3, 9, 10, 9, 8, 9)\\
    	17 & (3, 2, 1, 3, 2, 3, 9, 10, 8, 9, 8) & 18 & (3, 2, 1, 2, 3, 2, 9, 10, 8, 9, 8)\\
    	19 & (3, 1, 2, 1, 3, 2, 9, 10, 8, 9, 8) & 20 & (1, 2, 3, 2, 1, 2, 9, 10, 8, 9, 8)\\
    	21 & (1, 2, 3, 1, 2, 1, 9, 10, 8, 9, 8) & 22 & (2, 1, 2, 3, 2, 1, 9, 10, 8, 9, 8)\\
    	23 & (2, 3, 1, 3, 1, 3, 9, 10, 8, 9, 8) & 24 & (2, 3, 2, 1, 2, 3, 9, 10, 8, 9, 8)\\
    	\hline
		\end{tabular}
		\end{ex}

		A natural follow-up question is what does $G(w)$ look like if $w$ contains multiple 4321-patterns? However the intersections of 8-cycles are too complicated to visualize.
		
		We now consider 4321-avoiding permutations. Since there are no 8-cycles, $G(w)$ is composed of only 4-cycles, and thus we can obtain a stack of rectangles.

	\begin{ex}
    The graph $G(5, 3, 1, 6, 4, 2)$ is a combination of a cube and a square as shown below.

    \centerline{
    \xymatrix{
    & 1 \ar[d]\ar[dr]&\\
    & 3 \ar[dl]\ar[d]\ar[dr] & 2\ar[d]\\
    7 \ar[d] \ar[dr] & 5 \ar[dl]\ar[dr] & 4\ar[dl]\ar[d] \\
    9\ar[dr] & 8 \ar[d] & 6 \ar[dl] \\
    & 10 & \\
    }
    }

		The following table lists a reduced decomposition from each commutation class.

		\begin{tabular}[t]{|c|c|c|c|}
    \hline
    Vertex & Commutation Class & Vertex & Commutation Class\\\hline
    1 & (4, 3, 2, 3, 4, 1, 2, 5, 4, 5) & 2 & (4, 2, 3, 2, 4, 1, 2, 5, 4, 5)\\
    3 & (2, 4, 3, 4, 2, 1, 2, 5, 4, 5) & 4 & (2, 4, 3, 4, 2, 1, 2, 4, 5, 4)\\
    5 & (2, 4, 3, 4, 1, 2, 1, 5, 4, 5) & 6 & (2, 4, 3, 4, 1, 2, 1, 4, 5, 4)\\
    7 & (2, 3, 4, 3, 2, 1, 2, 5, 4, 5) & 8 & (2, 3, 4, 3, 2, 1, 2, 4, 5, 4)\\
    9 & (2, 3, 4, 3, 1, 2, 1, 5, 4, 5) & 10 &(2, 3, 4, 3, 1, 2, 1, 4, 5, 4)\\
    \hline
		\end{tabular}
	\end{ex}




\section{Subnetworks}

In this section, we study the enumeration of subnetworks. We first investigate the relationship between subnetworks and commutation classes.

	\begin{lemma} \label{equivSub}
    Suppose a permutation $w$ contains a pattern $p$, and $X$ is a commutation class of reduced decompositions of $p$. Then the number of $X$-subnetworks is the same for each reduced decomposition in a commutation class of $w$.
	\end{lemma}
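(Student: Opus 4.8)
The plan is to reduce the statement to invariance under a \emph{single} short braid move on $\rho$. By the definition of a commutation class, any two reduced decompositions of $w$ lying in the same class are connected by a sequence of short braid moves $s_is_j=s_js_i$ with $|i-j|\ge 2$, so it suffices to fix one such move and one set $S$ of $m$ chosen integers, and show that the move does not change the commutation class of the reduced decomposition of $p$ that $S$ induces. This at once implies that $S$ is an $X$-subnetwork before the move if and only if it is one afterward, and summing over all $S$ gives the invariance of the total count.

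First I would adopt the wiring-diagram (pseudoline arrangement) picture of $\rho$: each of the $n$ integers is a wire, every pair of wires crosses at most once, and the word induced on $S$ is obtained by recording, in temporal order, the crossings between two wires both belonging to $S$, each crossing labelled by $k$ when the two crossing wires are the $k$-th and $(k+1)$-th among the $m$ wires of $S$ ordered by their current position. The key preliminary observation is that the \emph{set} of crossing pairs is determined by $w$ (and, for an internal crossing, by the induced pattern $p$) and is therefore untouched by short braid moves; only the temporal order of crossings can change. A short braid move swaps two temporally adjacent crossings acting on the disjoint position-pairs $\{a,a+1\}$ and $\{b,b+1\}$ with $|a-b|\ge 2$, hence on four distinct wires.

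Next I would run the case analysis according to how many of these two crossings are \emph{internal} to $S$, meaning that both of the crossing wires lie in $S$. If neither is internal, the induced word is literally unchanged, since neither swapped crossing appears in it. If exactly one is internal, say the one at $\{a,a+1\}$, the induced word is again unchanged: the non-internal crossing never appears, so the order of internal crossings is preserved, and the label $k=1+(\#\text{ chosen wires at positions}<a)$ of the surviving crossing is unaffected because $|a-b|\ge 2$ forces both positions $b,b+1$ to lie entirely on one side of $\{a,a+1\}$, so swapping the wires there leaves the count of chosen wires left of position $a$ unchanged. If both are internal, with say $b\ge a+2$, then the chosen wire at position $a+1$ is strictly left of the chosen wire at position $b$, so the two labels $k,k'$ satisfy $k'\ge k+2$; thus swapping their order is exactly a short braid move in $S_m$, and the induced word changes only within its commutation class $X$.

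Combining the three cases shows that a short braid move on $\rho$ preserves the commutation class of the word induced on every fixed $S$, which is precisely what is needed. The hard part will be the bookkeeping in the ``exactly one internal'' and ``both internal'' cases: one must check carefully that the disjointness and separation of the position-pairs $\{a,a+1\}$ and $\{b,b+1\}$ force, respectively, the label of the surviving crossing to be preserved and the two induced labels to differ by at least $2$. Everything else is a routine translation between braid moves on $\rho$ and braid moves on the induced word.
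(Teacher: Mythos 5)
Your proposal is correct and follows essentially the same route as the paper's proof: reduce to a single short braid move on $\rho$, fix the chosen subset $S$, and split into cases according to whether $S$ contains all four affected elements (in which case the induced word undergoes a short braid move, since the two induced labels differ by at least $2$) or not (in which case the induced word is unchanged). Your treatment is somewhat more careful than the paper's, in particular the check in the ``exactly one internal'' case that the label of the surviving crossing is preserved, a point the paper passes over silently.
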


	\begin{proof}
	
	Let $\rho$ be a reduced decomposition in a commutation class of $w$. Suppose a short braid move changes $s_is_j$ into $s_js_i$ where $s_i$ transposes $(a, b)$ and $s_j$ transposes $(c, d)$. Let $\rho'$ be the image of $\rho$ after this move. Since $|i-j|\ge 2$, $a, b, c, d$ are all distinct. If $p\in S_k$, then we pick $k$ elements from $\{1, 2, \ldots, n\}$ whose reduced decomposition is $t\in X$ when we apply $\rho$ to $w$, and $t'$ when we apply $\rho'$ to $w$. If these $k$ elements contain all of $\{a, b, c, d\}$, then $t$ transposes $(a, b)$ and $(c, d)$ in two consecutive moves. We can interchange these two moves to obtain $t'$. Thus $t'$ is obtained by applying a short braid move to $t$, which means $t'\in X$. If not all of $\{a, b, c, d\}$ belong to these $k$ elements, then $t'=t$, which is in $X$.
	\end{proof}

	We now present two interesting applications of Lemma \ref{equivSub}.

	\begin{defn}
    A permutation $w$ is \textit{p-friendly} if any $321$-pattern of $w$ is contained in exactly $k$ $p$-patterns for a constant $k$.
	\end{defn}

	\begin{ex}
		The longest word $w_0\in S_n$ is $p$-friendly if $p$ is the longest word of $S_m$. In this case each triple is contained in $\binom{n-3}{m-3}$ $p$-patterns.
	\end{ex}

	\begin{prop}
    Suppose $p$ contains exactly one $321$-pattern, and $w$ is $p$-friendly with every $321$-pattern contained in exactly $k$ $p$-patterns. Let $X$ be the commutation class of the highest rank in $P(p)$. Let $c$ denote the sum of indices of the commutation class of the lowest rank in $P(w)$. Then the number of $X$-subnetworks in a reduced decomposition $i_1, i_2, \ldots, i_l$ of $w$ is equal to $(k\sum_{j=1}^{l}i_j)-c$.
	\end{prop}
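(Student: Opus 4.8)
The plan is to reduce the computation to counting $212$-subnetworks, for which the desired linear formula is already implicit in the proof of Lemma~\ref{rankLemma}. Write $f(\rho)$ for the number of $X$-subnetworks of a reduced decomposition $\rho$ of $w$. By Lemma~\ref{equivSub} the quantity $f$ is constant on each commutation class, so it descends to a function on the vertices of $G(w)$, and it therefore suffices to identify it with a known class function. The target identity is $f(\rho)=k\cdot(\text{number of }212\text{-subnetworks of }\rho)$, after which the conclusion follows at once from the computation in the proof of Lemma~\ref{rankLemma} that the number of $212$-subnetworks equals $\sum_{j=1}^{l} i_j - c$.

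The key structural step is a clean description of the $X$-subnetworks. Since $p$ contains exactly one $321$-pattern, we have $N_{321}(p)=1$, so by Lemma~\ref{rankLemma} the poset $P(p)$ has exactly two ranks; its top class $X$ is the unique class carrying a single $212$-subnetwork, supported on the unique $321$-pattern of $p$. Using transitivity of the induced-decomposition operation (the decomposition induced on a subset of a subset agrees with the one induced directly), I would prove that an $m$-element set $T$ is an $X$-subnetwork of $\rho$ if and only if $T$ is a $p$-pattern of $w$ and the unique triple $\Delta\subseteq T$ realizing the $321$-subpattern of $p$ is itself a $212$-subnetwork of $\rho$. Both implications rely on $p$ having only two commutation classes, so that membership of $T$ in $X$ (as opposed to the bottom class) is detected precisely by whether $\Delta$ induces $(2,1,2)$ or $(1,2,1)$.

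With this characterization I would count by grouping the $X$-subnetworks according to their internal triple $\Delta$. Each such $\Delta$ is a $212$-subnetwork, hence a $321$-pattern of $w$; conversely, given any $212$-subnetwork $\Delta$, the hypothesis that $w$ is $p$-friendly says $\Delta$ lies in exactly $k$ $p$-patterns, and since $p$ has a unique $321$-pattern, $\Delta$ is the internal triple of each of these $k$ patterns, so each of them is an $X$-subnetwork. This produces a $k$-to-one correspondence and hence $f(\rho)=k\cdot(\text{number of }212\text{-subnetworks of }\rho)$. Substituting the value $\sum_{j=1}^{l} i_j - c$ from Lemma~\ref{rankLemma} (the minimal class of $P(w)$ carries no $212$-subnetwork, which fixes the additive constant at its rank $c$) then pins down $f(\rho)$ as the asserted linear function of $\sum_{j=1}^{l} i_j$.

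The main obstacle will be the local analysis underlying the characterization in the second paragraph: one must check that a long braid move on three strands $a,b,c$ alters the induced decomposition of exactly those subsets $T$ that contain all of $a,b,c$, and leaves the induced decomposition of every other subset unchanged up to commutation. Granting this, the two-rank structure of $P(p)$ together with transitivity of subnetworks makes the ``internal triple'' test well defined and stable under moves, and the $p$-friendliness hypothesis is exactly what converts the grouping into the factor $k$. A secondary point to secure is that the top rank of $P(p)$ consists of a single class, so that $X$ is unambiguous and the equivalence in the characterization is an honest ``if and only if.''
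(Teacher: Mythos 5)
Your argument follows the same essential route as the paper's: both rest on Lemma \ref{equivSub}, on the fact that $N_{321}(p)=1$ forces $G(p)$ to have exactly two commutation classes, distinguished by whether the unique $321$-triple of $p$ induces $(2,1,2)$ or $(1,2,1)$, and on $p$-friendliness to produce the factor $k$. The paper phrases this dynamically (each downward long braid move destroys exactly $k$ $X$-subnetworks while lowering the index sum by $1$, anchored at the bottom class of $P(w)$), whereas you phrase it statically (an $m$-set is an $X$-subnetwork iff it is a $p$-pattern whose internal $321$-triple is a $212$-subnetwork, then double-count against $212$-subnetworks and quote the count $\sum_j i_j - c$ implicit in Lemma \ref{rankLemma}). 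Your observation that any $p$-pattern containing a given $321$-triple $\Delta$ must have $\Delta$ as its internal triple, because $p$ has only one $321$-pattern, is exactly what makes the $k$-to-one correspondence work, and it is the same fact the paper uses; the repackaging is harmless.

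However, your last step does not close. The correct output of your computation is
$$f(\rho)=k\cdot\Bigl(\sum_{j=1}^{l} i_j - c\Bigr)=k\sum_{j=1}^{l} i_j - kc,$$
which is not the asserted $\bigl(k\sum_{j=1}^{l} i_j\bigr)-c$ unless $k=1$; you declare agreement without checking the constant term. A concrete test: take $p=3214$ and $w=32145$, so $k=2$ and $c=4$. The class of $(2,1,2)$ has exactly two $X$-subnetworks, namely $\{1,2,3,4\}$ and $\{1,2,3,5\}$, matching $k(\sum_j i_j-c)=2(5-4)=2$ but not $k\sum_j i_j-c=6$. To be fair, the paper's own proof derives precisely ``rate $k$, value $0$ at index sum $c$'' and then writes the same expression $(k\sum_j i_j)-c$, so the constant term in the statement itself appears to be in error; but your derivation actually produces the corrected formula, and a careful writeup should have flagged the mismatch rather than asserting that the substitution ``pins down $f(\rho)$ as the asserted linear function.''
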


	\begin{proof}
     By Lemma $\ref{equivSub}$, the number of $X$-subnetworks is the same in any commutation classes of $w$. Now let's consider the effect of a long braid move. If a long braid move changes $i+1, i, i+1$ to $i, i+1, i$, then the triple it acted on must be contained in exactly $k$ $p$-patterns. Since $p$ contains exactly one $321$-pattern, there are exactly two commutation classes of the reduced decompositions of $p$. Thus number of $X$-subnetworks decreases by $k$. On the other hand, the sum of indices decreases by exactly $1$. Therefore, the number of subnetworks changes at a rate exactly $k$ times the sum of indices. Since the commutation class of the lowest rank in $P(w)$ (whose sum of indices is equal to $c$) has no $X$-subnetworks, the number of $X$-subnetworks in a reduced decomposition $i_1, i_2, \ldots, i_l$ of $w$ is exactly $(k\sum_{j=1}^{l}i_j)-c$.
	\end{proof}

We now discuss the case when $p$ is the longest word of $S_4$.

	\begin{prop} \label{subEnum}
    Let $\rho=(i_1, i_2, \ldots ,i_{l})$ be a reduced decomposition of the long word $w_0=(n, n-1, \ldots, 1)$, where $l=\binom{n}{2}$. Let $X=\{123212, 321232, 212321, 232123\}$. Then the number of $X$-subnetworks induced by $\rho$ is equal to $$\displaystyle\sum_{j=1}^{l}(i_j-1)(n-i_j-1)-2\dbinom n 4.$$
	\end{prop}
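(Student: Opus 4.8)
The plan is to reinterpret both sides of the identity through the wiring picture underlying the subnetwork construction and to reduce everything to a finite check inside $S_4$. First I would read $\rho=(i_1,\ldots,i_l)$ as a sequence of adjacent swaps acting on $n$ strands (the tracked integers). Because $w_0$ has all $\binom n2$ inversions, every pair of strands crosses exactly once, so any $4$-subset $Q$ of strands crosses $\binom 42=6$ times and the word $w_Q$ that $\rho$ induces on $Q$ is a full-length, hence reduced, decomposition of the longest element $4321\in S_4$. The goal of this first step is the weighted identity
\[
\sum_{j=1}^{l}(i_j-1)(n-i_j-1)\;=\;\sum_{Q}\#\{\,s_2\text{ in }w_Q\,\},
\]
where $Q$ ranges over all $4$-subsets of strands and $\#\{s_2\text{ in }w_Q\}$ counts the letters of $w_Q$ equal to $2$.

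The crux, and the step I expect to be the main obstacle, is establishing this identity bijectively. At step $j$ the swap $s_{i_j}$ exchanges the strands in positions $i_j$ and $i_j+1$; the factor $(i_j-1)$ counts strands currently strictly to the left of that crossing and $(n-i_j-1)$ counts strands strictly to the right. I would match each choice of a left strand $L$ and a right strand $R$ to the quadruple $Q=\{L,R\}\cup\{\text{the two crossing strands}\}$: among the four strands of $Q$, at this moment $L$ is leftmost and $R$ is rightmost, so the crossing of the middle two strands is recorded as the letter $2$ in $w_Q$. Conversely, every occurrence of the letter $2$ in every $w_Q$ is a crossing of two strands of $Q$ with exactly one strand of $Q$ on each side, hence is produced by a unique pair $(L,R)$ at a unique step $j$. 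Checking that this correspondence is a genuine bijection, in particular that the two flanking strands sit strictly to the left and right as required, is where the care is needed.

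Next I would carry out the finite classification inside $S_4$. The number of letters equal to $2$ is visibly unchanged by the commutation $s_1s_3=s_3s_1$, consistent with Lemma \ref{equivSub}, so it depends only on the commutation class. Listing the eight commutation classes of $4321$, I would verify that $w_Q$ contains exactly three $2$'s precisely when $w_Q$ lies in one of the four singleton classes $X=\{123212,321232,212321,232123\}$, and exactly two $2$'s otherwise. Thus $\#\{s_2\text{ in }w_Q\}=2+[\,w_Q\in X\,]$ for every quadruple, where $[\,\cdot\,]$ is $1$ if the condition holds and $0$ otherwise.

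Finally I would combine the two steps. Summing over all $\binom n4$ quadruples gives
\[
\sum_{j=1}^{l}(i_j-1)(n-i_j-1)=\sum_{Q}\bigl(2+[\,w_Q\in X\,]\bigr)=2\binom n4+\#\{X\text{-subnetworks}\},
\]
and rearranging yields the claimed formula. The only genuinely delicate point is the bijective reading of the weight $(i_j-1)(n-i_j-1)$ as counting flanking pairs; once that is in place, the rest is the bookkeeping of the $S_4$ case analysis.
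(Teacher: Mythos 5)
Your proof is correct, but it takes a genuinely different route from the paper's. The paper argues by ``base case plus invariance'': it first checks the formula on the specific reduced decomposition that moves $n$, then $n-1$, etc., into place (which has no $X$-subnetworks, while the sum evaluates to $2\binom{n}{4}$, so both sides vanish), and then shows that every braid move changes both sides of the identity by the same amount --- short braid moves leave both sides fixed by Lemma \ref{equivSub}, and a long braid move $(i+1,i,i+1)\to(i,i+1,i)$ changes both the sum and the number of $X$-subnetworks by $2i-n+1$, the latter by analyzing which fourth strands $d$ complete the affected triple to an $X$-quadruple before and after the move. You instead give a direct double count with no induction over braid moves: the weight $(i_j-1)(n-i_j-1)$ counts flanking pairs $(L,R)$ of the $j$th crossing, and your bijection with occurrences of the letter $2$ in the induced words $w_Q$ is sound, because a crossing of two strands of $Q$ is recorded as a $2$ in $w_Q$ exactly when one strand of $Q$ lies on each side of it, and in a reduced word of $w_0$ every pair of strands of $Q$ crosses exactly once, so each $w_Q$ really is a length-six reduced word of $4321$. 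The finite check you defer is also correct: the four words of $X$ are precisely the singleton commutation classes of $4321$ and each contains three $2$'s, while the remaining twelve reduced words all contain exactly two. What your argument buys is independence from Lemma \ref{equivSub} and from the connectivity of the set of reduced words under braid moves, together with a transparent explanation of both terms of the formula (the $2\binom{n}{4}$ is the two middle crossings that every quadruple is guaranteed to have); what the paper's argument buys is brevity, since Lemma \ref{equivSub} is already in place and the same ``track the change under a long braid move'' template is reused elsewhere in Section 5.
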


	\begin{proof}
    First notice that this formula is true for the reduced decomposition $(n-1)(n-2, n-1)(n-3, n-2, n-1)\cdots(k, k+1,\ldots ,n-1)\cdots (1,\ldots, n-2, n-1)$. (We basically move $n$ to the first position, then move $n-1$ to the second position, and so on.) The formula yields exactly zero.

    Now by Lemma $\ref{equivSub}$, the number of $X$-subnetworks is invariant under a short braid move. Thus we only need to consider the change under a long braid move. Notice that no words in $X$ contain $121$ or $323$, and furthermore only words in $X$ contain $212$ or $232$. Suppose a long braid move $i+1, i, i+1$ affects the triple $(a, b, c)$ where $a<b<c$. Before we apply this move, a quadruple $(a, b, c, d)$ is an $X$-subnetwork if and only if $d$ is to the right of $c$. After this move, a quadruple $(a, b, c, d)$ is an $X$-subnetwork if and only if $d$ is to the left of $c$. Thus the net change in the number of $X$-subnetworks is $2i-n+1$, which is exactly what the formula would give.
	\end{proof}

	\begin{remark}
    In fact, all four elements of $X$ occur with equal probability over all reduced decompositions of $w_0$. We conclude our discussion with a generalization of this interesting phenomenon.
	\end{remark}

	\begin{defn}
    Let $w\in S_n$. The \textit{complement} of a reduced decomposition $(r_1, r_2, \ldots, r_l)$ of $w$ is $(n-r_1, n-r_2, \ldots, n-r_l)$. The \textit{reverse} of $(r_1, r_2, \ldots, r_l)$ is $(r_l, r_{l-1}, \ldots, r_1)$.
	\end{defn}
	
Note that the complement and the reverse may not exist for an arbitrary reduced decomposition.


	\begin{prop}
    Let $w\in S_n$, $p\in S_m$, and let $x$ and $x'$ be two reduced decompositions of $p$.
    \begin{enumerate}
        \item  If $x$ and $x'$ are reverses, then the total number of $x$-subnetworks is equal to the total number of $x'$-subnetworks over all reduced decompositions of $w$.
        \item If $w$ is the longest word of $S_n$, and $p$ is the longest word of $S_m$, then $x$ must have a complement $x'$. Furthermore, the total number of $x$-subnetworks is equal to the total number of $x'$-subnetworks over all reduced decompositions of $w$.
    \end{enumerate}
	\end{prop}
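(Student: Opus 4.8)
The plan is to prove both parts by exploiting the two natural symmetries of reduced decompositions introduced above --- the \emph{reverse} and the \emph{complement} --- and tracking how each acts on induced subnetworks. Write $T_x(w)$ for the total number of $x$-subnetworks summed over all reduced decompositions of $w$. The basic observation, which I would establish first by following crossings in a wiring diagram exactly as in the proof of Lemma \ref{equivSub}, is this: if $\rho$ is a reduced decomposition of $w$, then its reverse $\rho^{r}$ is a reduced decomposition of $w^{-1}$, and reversal sets up a bijection from the $x$-subnetworks of $\rho$ onto the $x'$-subnetworks of $\rho^{r}$ (reversing the order of the whole word reverses the order of the crossings on any chosen value-set, so each induced word gets reversed). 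Dually, the complement $\rho^{c}$ (each $s_i$ replaced by $s_{n-i}$) is a reduced decomposition of $w_0 w w_0$, and complementation sets up a bijection from the $x$-subnetworks of $\rho$ onto the $x^{c}$-subnetworks of $\rho^{c}$: the left--right flip $s_i \mapsto s_{n-i}$ preserves the time-order of crossings but reverses relative positions, so each induced word gets complemented in $S_m$, while the value-set $S$ is replaced by $S^{c}=\{\,n+1-v : v\in S\,\}$.

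For part 1, first note that since $x'$ is the reverse of $x$ and both are reduced decompositions of the same $p$, the permutation $p$ must be an involution: the reverse of a reduced decomposition of $p$ is a reduced decomposition of $p^{-1}$. Now apply the reversal bijection globally. It is a bijection from the reduced decompositions of $w$ to those of $w^{-1}$ carrying $x$-subnetworks to $x'$-subnetworks, whence $T_x(w)=T_{x'}(w^{-1})$. In particular, when $w$ is an involution this is exactly the desired equality $T_x(w)=T_{x'}(w)$.

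For part 2 the complement does the work and, crucially, stays inside the reduced decompositions of the \emph{same} word. Since $w=w_0$ in $S_n$ and $p=w_0$ in $S_m$ are each fixed by the complementing automorphism $s_i\mapsto s_{n-i}$ (resp.\ $s_i\mapsto s_{m-i}$), the complement $x'$ of $x$ is again a reduced decomposition of $p$ --- this is precisely the assertion that ``$x$ must have a complement'' --- and $\rho\mapsto\rho^{c}$ is a bijection from the reduced decompositions of $w_0$ onto themselves. By the complement observation it carries the $x$-subnetworks of $\rho$ (on value-sets $S$) bijectively to the $x'$-subnetworks of $\rho^{c}$ (on the complementary value-sets $S^{c}$). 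Summing over all $\rho$ yields $T_x(w_0)=T_{x'}(w_0)$ directly, with no change of the underlying permutation.

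The main obstacle lies entirely in part 1: reversal inevitably sends $w$ to $w^{-1}$, so to get the equality over the reduced decompositions of the \emph{same} $w$ (for a general, non-involutive $w$) one must still prove that $T_x$ is unchanged under $w\mapsto w^{-1}$. This inverse-invariance cannot be produced by reverse and complement alone: on permutations these operations generate only $w\mapsto w^{-1}$, $w\mapsto w_0 w w_0$, and their composite $w\mapsto w_0 w^{-1} w_0$, and the only one of these that inverts a general $w$ is reversal itself, which necessarily reverses the induced word rather than fixing it. I would therefore attack $T_x(w)=T_x(w^{-1})$ separately, either by constructing a bijection between reduced decompositions of $w$ and of $w^{-1}$ that preserves the entire profile of induced subnetworks, or by encoding the subnetwork totals as a refinement of the Stanley symmetric function and invoking $F_w=F_{w^{-1}}$. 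Small cases (e.g.\ $w=2413$, $w^{-1}=3142$, where the relevant totals all equal $2$) confirm the invariance and suggest such a bijection exists; making it explicit is the crux of the whole argument.
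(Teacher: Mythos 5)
Your overall strategy coincides with the paper's: part 1 via the reversal bijection on words and on induced subnetworks, part 2 via complementation together with the observation that the longest word (in both $S_n$ and $S_m$) is preserved by $s_i\mapsto s_{n-i}$, with each value-set $S$ sent to its complement $S^c$. Your part 2 is complete, and your justification that the complement of $x$ exists (the complementing automorphism fixes $w_0$) is more transparent than the paper's appeal to Reiner's cyclic shifting trick; either works.

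The concern you raise about part 1 is genuine, and it is a gap in the paper's own proof, not an artifact of your reading. The paper takes $\rho'$ to be the reverse of $\rho$ and then sums ``over all reduced decompositions of $w$,'' silently treating $\rho\mapsto\rho'$ as a bijection from the reduced decompositions of $w$ to themselves; but the reverse of a reduced decomposition of $w$ is a reduced decomposition of $w^{-1}$, so the argument as written only yields that the $x$-count over reduced decompositions of $w$ equals the $x'$-count over reduced decompositions of $w^{-1}$. (The remark just before the proposition, that the reverse ``may not exist'' for an arbitrary reduced decomposition, suggests the intended scope may be involutive $w$, in which case both your argument and the paper's are complete.) For general $w$ the missing ingredient is exactly the inverse-invariance you isolate, and as you correctly note it cannot be extracted from reverse and complement alone: reversal is the only induced operation inverting a general $w$, and it necessarily reverses the induced word, so the needed identity is equivalent to the claim itself. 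You do not close this gap either --- your proposed routes via a subnetwork-profile-preserving bijection between reduced decompositions of $w$ and of $w^{-1}$, or via $F_w=F_{w^{-1}}$, are plausible but not carried out --- so part 1 of your proposal, like the paper's, is complete only for involutive $w$. Stating the obstruction precisely is nonetheless an improvement over the source, which does not acknowledge it.
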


	\begin{proof}
    Let $\rho$ denote a reduced decomposition of $w$. Then we claim that there exists an $\rho'$ such that an $x$-subnetwork of $\rho$ can be mapped bijectively to an $x'$-subnetwork of $\rho'$. Since we consider all reduced decompositions of $w$, the total number of $x$-subnetworks is equal to the total number of $x'$-subnetworks.

    \begin{enumerate}

        \item Given any reduced decomposition $\rho$ of $w$, let $\rho'$ denote the reverse of $\rho$. Then for any pair of transpositions $\boldsymbol{i}$ and $\boldsymbol{j}$, we have $\boldsymbol{i}$ occurs before $\boldsymbol{j}$ in $\rho$ implies that $\boldsymbol{i}$ occurs after $\boldsymbol{j}$ in $\rho'$, and vice versa. Therefore, every $x$-subnetwork of $\rho$ is mapped to an $x'$-subnetwork of $\rho'$.

        \item If $p$ is the longest word of $S_m$, then the complement of $x$ is obtained by applying Reiner's cyclic shifting trick (see $\cite{Reiner}$) $m$ times. Thus every reduced decomposition of the longest word has a complement. Given a reduced reduced decomposition $\rho$ of $w$. Let $\rho'$ denote the complement of $\rho$. Suppose $(a_1, a_2, \ldots, a_m)$ is an $x$-subnetwork, then $(n-a_m, n-a_{m-1}, \ldots, n-a_1)$ is an $x'$-subnetwork.

    \end{enumerate}
	\end{proof}



\section{Remarks and questions}

	In this paper, we studied three aspects of commutation classes: the size of $G(w)$, the structure of $G(w)$, and the enumeration of subnetworks. Our concluding remarks are also divided into three sections.
	
	\subsection{The size of $G(w)$}
	
		We first examine our bound in Theorem \ref{bounding} for four types of permutations:
		
		\begin{itemize}
  			\item \textit{321 avoiding}: The lower bound yields 1, which is exactly the number of vertices of $G$.
  			\item \textit{Y=1}: In this case, $G(w)$ is a path. By Corollary $\ref{pathGraph}$, there are $N_{321}(w)+1$ vertices, which favors the lower bound.
  			\item \textit{$w_0=n, n-1, \ldots, 2, 1$}: The exact number is not known, but Knuth showed in $\cite{Knuth}$ that $Y(w_0)>n^2/6-O(n)$. Since $l=\binom {n} {2}$, both bounds are decent.
  			\item \textit{Freely Braided}: By Corollary $\ref{freeBraid}$, $G=2^Y$, which usually favors the lower bound unless $Y$ is close to $l$.
		\end{itemize}
				
		We note here that Green and Losonczy \cite{Green} proved that the $|G(w)|\le 2^{N_{321}}$. Since the number of 321-patterns increases dramatically as the number of inversions increases, this bound is more accurate for permutations with a small number of inversions, while the upper bound in Theorem \ref{bounding} is more accurate for permutation with a large number of inversions. For example, the permutation $w_0$ has $\binom n 3$ patterns of 321, but it only has $\binom n 2$ inversions. Can we refine the upper bound by combining these two bounds? Counting the number of Elnitsky Polygons might be a useful tool.
		
		In the proof of Theorem \ref{boundSum}, we bounded the number of commutation classes by the Catalan number. The representative elements are in some sense variations of the bidirectional ballot sequence studied by Zhao \cite{Zhao}. Can we refine the bound in Theorem \ref{boundSum} by studying such sequences? Does the combination of Theorem \ref{bounding} and Theorem \ref{boundSum} yield tighter bounds?
		
		Our method in the proof of the upper bound of Theorem \ref{boundSum} is inspired by the author's recent work \cite{Meng} on the higher Bruhat order. The higher Bruhat order is a generalization of the symmetric group. Is there an analogue of the higher Bruhat order for Coxeter groups of type B? Such a generalization might help us to refine the bound for type B in Section \ref{upperbound}.

	\subsection{The structure of $G(w)$}
	
	In a recent paper (\cite{Meng} Theorem 4.2), the author showed that $G(w)$ is isomorphic to an induced subgraph of $G(w_0)$ for all $w$. This implies that analyzing the structure of special $w$ is crucial for understanding the overall structure of $G(w_0)$. Ziegler (\cite{Ziegler} Theorem 5.2) combined with Felsner and Weil (\cite{FelsnerWeil} Theorem 1) showed that $G(w_0)$ is homotopy equivalent to an $(n-4)$-sphere. What is the topological structure of $G(w)$ for arbitrary $w$?
	
	In Section 4, we studied the cycle structure of $G(w)$ using pattern avoidance. However, we are unable to describe $G(w)$ if $w$ contains multiple 4321-patterns. One possible approach to this complicated problem is to start with the case when the 4321-pattern do not intersect with each other. Does $G(w)$ have a nice geometric structure if $w$ contains no intersecting 4321-patterns?
	
	Now let's talk about involutions. Following the work of Richardson and Springer \cite{Richardson}, there has been a surge of interest in the strong Bruhat order on the involutions (e.g. \cite{Hultman1, Hultman2, Hultman}.) However, the weak Bruhat order on involutions has never been studied before. This is because the poset induced by the weak Bruhat order on involutions is not a graded poset, and reduced decomposition cannot be defined for a non-graded poset. Nevertheless, Incitti's pictorial classification of cover relations in the strong Bruhat order on involutions in \cite{Incitti} can be easily translated to the weak Bruhat order. Can we find an analogue of $G(w)$ for involutions that avoid certain patterns?

	
	\subsection{Subnetworks}
		
		The subnetwork is a useful tool to study commutation classes. In Section \ref{poset}, we constructed commutation classes using 212-subnetworks. The proof of Theorem \ref{recPerm} essentially used the fact that a subnetwork appears as a substring only if the permutation is vexillary by Tenner's result in \cite{Tenner}. Since a subnetwork records the relative positions of a subset of $\{1, 2, \ldots, n\}$, it can be thought of as a permutation pattern for reduced decompositions. Therefore it is natural for us to study enumerations of subnetworks.
		
		Unlike permutation patterns, however, the study of subnetworks is a relatively new subject.
		
		The concept of subnetworks was first introduced by Warrington \cite{Warrington} in November 2009 (when his preprint first appeared on the arXiv). Since then, Angel and Holroyd \cite{Angel} studied the enumeration of subnetworks for the first time. This subject is still wide open.
		
		One interesting topic is subnetwork-avoidance. We say that a reduced decomposition is \textit{$X$-avoiding} if it does not induce any $X$-subnetworks. In Proposition \ref{subEnum}, we found the exact number of $X$-subnetworks induced by the reduced decomposition of the long word $w_0$ for $X=\{123212, 321232, 212321, 232123\}$, but our result does not immediately give the number of reduced decompositions that avoids $X$. Warrington \cite{Warrington1} has generated the following data for the number of $X$-avoiding reduced decompositions starting from $n=3$:
		
		$$2, 12, 328, 54520, 68641152.$$
		
		\paragraph*{Question} Let $X=\{123212, 321232, 212321, 232123\}$, and $w_0=n, n-1, \ldots, 1$. How many reduced decompositions of $w_0$ are $X$-avoiding? How many commutation classes are $X$-avoiding?


\section*{Acknowledgements}
	
Particular thanks are due to Richard Stanley for his suggestion to study reduced decompositions and for his continued guidance.
	
Part of this research was carried out at the University of Minnesota Duluth under the supervision of Joseph Gallian, with the financial support of the National Science Foundation and the Department of Defense (grant number DMS 0754106), the National Security Agency (grant number H98230-06-1-0013), and the MIT Department of Mathematics. The author would like to thank Joseph Gallian for his encouragement and support.

In addition, the author would like to thank Vic Reiner for pointing out the works of Felsner and Ziegler on the higher Bruhat order, and Francesco Brenti for reading this paper and making valuable suggestions.

\end{document}